\newtheorem{theorem}{Theorem}[section]
\newtheorem{lemma}[theorem]{Lemma}
\newtheorem{corollary}[theorem]{Corollary}
\theoremstyle{definition}
\newtheorem{remark}[theorem]{Remark}
\newtheorem{example}[theorem]{Example}
\newtheorem{definition}[theorem]{Definition}
\newtheorem{question}[theorem]{Question}
\numberwithin{equation}{section} \numberwithin{figure}{section}
\DeclareMathOperator{\Aut}{Aut}
\DeclareMathOperator{\Spec}{Spec}
\DeclareMathOperator{\im}{Im}
\DeclareMathOperator{\characteristic}{char}
\DeclareMathOperator{\Br}{Br}
\let\H\relax
\DeclareMathOperator{\H}{H}
\DeclareMathOperator{\PGL}{PGL}
\DeclareMathOperator{\SL}{SL}
\renewcommand{\emptyset}{\varnothing}
\newcommand{\To}{\longrightarrow}
\newcommand{\kbar}{\overline{k}}
\newcommand{\Qbar}{\overline{\QQ}}
\newcommand\Gm{\GG_{\mathrm{m}}}
\newcommand{\fm}{\mathfrak{m}}
\newcommand{\fp}{\mathfrak{p}}
\newcommand{\ucA}{{\mathscr A}}
\newcommand{\ucM}{{\mathscr M}}
\newcommand{\ucR}{{\mathscr R}}
\newcommand{\ucU}{{\mathscr U}}
\newcommand{\ucX}{{\mathscr X}}
\newcommand{\ucY}{{\mathscr Y}}
\newcommand{\latt}[1]{{({#1})}}
\renewcommand{\AA}{\mathbb A}
\newcommand{\CC}{\mathbb C}
\newcommand{\FF}{\mathbb F}
\newcommand{\GG}{\mathbb G}
\newcommand{\NN}{\mathbb N}
\newcommand{\PP}{\mathbb P}
\newcommand{\QQ}{\mathbb Q}
\newcommand{\ZZ}{\mathbb Z}
\newcommand{\cA}{\mathcal A}
\newcommand{\cH}{\mathcal H}
\newcommand{\cM}{\mathcal M}
\newcommand{\cO}{\mathcal O}
\newcommand{\cQ}{\mathcal Q}
\newcommand{\cR}{\mathcal R}
\newcommand{\cX}{\mathcal X}
\definecolor{orange}{rgb}{1,0.5,0}
\subjclass[2020]
{
14K10, 
(
14H40, 
14D23, 
14M20, 
14G12, 
11G10.) 
}
\title{Rationality and arithmetic of the moduli of abelian varieties}
\author{Daniel Loughran and Gregory Sankaran}
\address{Department of Mathematical Sciences \\
University of Bath \\
Bath \\
BA2 7AY \\
UK.}
\begin{document}
	
\begin{abstract}
We study the rationality properties of the moduli space $\cA_g$ of
principally polarised abelian $g$-folds over $\QQ$ and apply the
results to arithmetic questions. In particular we show that any
principally polarised abelian threefold over $\FF_p$ may be lifted to
an abelian variety over $\QQ$. This is a phenomenon of low dimension:
assuming the Bombieri-Lang conjecture we also show that this is not
the case for abelian varieties of dimension at least seven. About
moduli spaces, we show that $\cA_g$ is unirational over $\QQ$ for
$g\le 5$ and stably rational for $g=3$. This also allows us to make
unconditional one of the results of Masser
and Zannier about the existence of
abelian varieties over $\QQ$ that are not isogenous to Jacobians.
\end{abstract}
	
\maketitle
\tableofcontents
	
\section{Introduction}\label{sec:intro}
Arithmetic properties of abelian varieties are strongly linked to the
geometry of their moduli spaces. Here we study both the birational
geometry over $\QQ$ of the coarse moduli space $\cA_g$ of principally
polarised abelian varieties (ppavs), and lifting and other arithmetic
properties of ppavs themselves.

\subsection{Lifting abelian varieties}
For any prime $p$ and any elliptic curve $E_p$ over $\FF_p$, there
exists an elliptic curve $E$ over $\QQ$ such that $E_p$ is the
reduction modulo $p$ of $E$ (we say that $E$ is a \emph{lift} of
$E_p$ to $\QQ$). Indeed, one simply takes a suitable lift of the
coefficients of $E_p$.

For higher dimensional abelian varieties the problem becomes more interesting.
Firstly we at least need that the abelian variety $A_p$ lifts from $\FF_p$ to $\QQ_p$;
to guarantee this we assume that $A_p$ is equipped with a principal polarisation \cite[Cor.~2.4.2]{Oor}. 
It is not too difficult to see that principally polarised
abelian surfaces always lift to $\QQ$ as the generic such surface is
the Jacobian of a hyperelliptic curve (see \S\ref{subsec:WA_A_2}). One
of our first results is that lifting can be achieved in dimension
three, and moreover for finitely many primes simultaneously.

For an abelian variety $A$ over $\QQ$ and a
prime $p$, we denote by $A_{\FF_p}$ the reduction modulo $p$ of the
N\'{e}ron model of $A$.

\begin{theorem}\label{thm:prescribedreductions}
Let $S$ be a finite set of rational primes, and for each $p \in S$ fix
$A_p$, a principally polarised abelian $3$-fold over $\FF_p$. Then
there exists a principally polarised abelian $3$-fold $A$ over $\QQ$
such that $A_p \cong A_{\FF_p} \text{ for all }p \in S$ as principally
polarised abelian varieties.
\end{theorem}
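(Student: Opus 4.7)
The plan is to combine Oort's lifting theorem for principally polarised abelian varieties with weak approximation on the moduli space $\cA_3$, exploiting the stable rationality of $\cA_3$ over $\QQ$ announced in the abstract as one of the paper's main results.

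First, for each $p \in S$, the cited result \cite[Cor.~2.4.2]{Oor} produces a principally polarised abelian scheme $\tilde A_p$ over $\ZZ_p$ whose special fibre is $A_p$; this gives a $\QQ_p$-point $\xi_p \in \cA_3(\QQ_p)$ whose reduction modulo $p$ equals the moduli point of $A_p$ in $\cA_3(\FF_p)$. By a small $p$-adic deformation we may assume $\xi_p$ lies in the generic locus of $\cA_3$, where the corresponding ppavs have automorphism group $\{\pm 1\}$ and are in fact Jacobians of smooth plane quartics. We then use the stable rationality of $\cA_3/\QQ$ to invoke weak approximation, producing a $\QQ$-rational point $\xi \in \cA_3(\QQ)$ arbitrarily $p$-adically close to each $\xi_p$.

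Next, the $\QQ$-point $\xi$ must be realised by an actual ppav $A/\QQ$ with the prescribed reductions. To bypass the gap between the coarse moduli space and the moduli stack, the cleanest approach is to run the entire approximation argument on an explicit rational parametrisation of the open locus of Jacobians of smooth plane quartics --- essentially a quotient of a Zariski open subset of $\PP^{14}$ by $\PGL_3$ --- on which a universal family of ppavs is defined. Closeness in the $p$-adic topology then propagates down to the fibres: the Jacobian of the resulting $\QQ$-rational quartic reduces modulo $p$ to the prescribed ppav $A_p$, because $\Spec \ZZ_p$-integral points of the parameter space that agree in $\cA_3(\FF_p)$ give isomorphic Jacobians (here we use that $\Br(\FF_p)=0$ and that for quartics the moduli is essentially fine).

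The main obstacle is this last descent step, since weak approximation on the coarse moduli $\cA_3$ by itself only forces equality of moduli points after reduction, not isomorphism of ppavs. Two technical issues demand care: first, one needs the parameter space of plane quartics to satisfy weak approximation in a way compatible with the paper's proof of stable rationality of $\cA_3$, which is plausible since the Torelli map on the non-hyperelliptic locus is an open immersion; second, at primes $p \in S$ where $A_p$ is hyperelliptic or decomposable and hence not a Jacobian of a smooth plane quartic, one must exploit the positive codimension of the hyperelliptic and decomposable loci in $\cA_3$ to first deform $\tilde A_p$ over $\ZZ_p$ into the generic Jacobian stratum before carrying out the approximation.
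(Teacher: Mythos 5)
Your overall strategy (lift to $\ZZ_p$ by Oort, move into the generic locus, apply weak approximation coming from stable rationality of $\cA_3$, then descend to actual abelian varieties via Hensel) is the same as the paper's, but the descent step --- which you correctly identify as ``the main obstacle'' --- is resolved incorrectly, and the gap is exactly the subtlety the paper is built to address. The claim that ``$\ZZ_p$-integral points of the parameter space that agree in $\cA_3(\FF_p)$ give isomorphic Jacobians\dots for quartics the moduli is essentially fine'' conflates two different moduli problems. The moduli of smooth plane quartics is indeed generically fine (the generic quartic has trivial automorphisms), but the moduli of their Jacobians is not: by the strong Torelli theorem $\Aut J(C) = \Aut C \oplus \ZZ/2\ZZ$, so $\ucA_3 \to \cA_3$ is generically a $\mu_2$-gerbe and the fibre of $\ucA_3(\FF_p) \to \cA_3(\FF_p)$ over a generic point has two elements for odd $p$, since $\H^1(\FF_p,\mu_2) \cong \ZZ/2\ZZ$. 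The nontrivial element is the quadratic twist of $J(C_p)$ by $\FF_{p^2}$, which is \emph{not} the Jacobian of any genus-$3$ curve over $\FF_p$ (this is Lemma~\ref{lem:nonjacobian3fold} of the paper). If the prescribed $A_p$ is such a twisted Jacobian, then no smooth plane quartic over $\QQ$ with good reduction at $p$ has Jacobian reducing to $A_p$, because the reduction of a Jacobian is again a Jacobian; so your construction cannot produce the required $A$ in general. (Note also that $\Br(\FF_p)=0$ is irrelevant here: it controls neutrality of the $\mu_2$-gerbe over $\FF_p$, i.e.\ the field-of-moduli versus field-of-definition question, not the twist ambiguity, which lives in $\H^1$ rather than $\H^2$.)

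The repair is precisely what the paper does: one must prove weak approximation for the \emph{stack} $\ucA_3$, not just for $\cA_3$. The paper uses Shimura's theorem that for odd $g$ the generic ppav descends to its field of moduli, so $\ucA_3$ is birational to a \emph{neutral} $\mu_2$-gerbe over $\cA_3$; weak approximation then follows from weak approximation for $\cA_3$ (stable rationality), weak approximation for the fibres $B\mu_2$ (surjectivity of $\QQ^\times/\QQ^{\times 2} \to \prod_{p\in S}\QQ_p^\times/\QQ_p^{\times 2}$), and the fibration lemma; the conclusion is then the stacky Hensel argument of Lemma~\ref{lem:WA_F_P}. In your language, you would need to parametrise pairs consisting of a plane quartic over $\QQ$ together with a square class $d \in \QQ^\times/\QQ^{\times 2}$, take $A$ to be the quadratic twist $J(C)^d$, and choose $d$ by weak approximation so that it reduces to the correct class in $\FF_p^\times/\FF_p^{\times 2}$ for every $p \in S$ simultaneously. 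Without this extra $\mu_2$-torsor of twists your argument only proves the theorem for those $A_p$ that are honest Jacobians over $\FF_p$.
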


This property should not hold in higher dimension.

\begin{theorem} \label{thm:BL}
Let $g \geq 7$ and assume the Bombieri--Lang conjecture. Then for all
but finitely many primes $p$, there exists a principally polarised
abelian $g$-fold $A_p$ over $\FF_p$ such that $A_p \not\cong
A_{\FF_p}$ for any principally polarised abelian $g$-folds $A$ over
$\QQ$.
\end{theorem}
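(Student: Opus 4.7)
I would combine three ingredients: the classical theorem of Freitag, Mumford and Tai that $\cA_g$ is of general type over $\QQ$ for every $g\geq 7$; the Bombieri--Lang conjecture, which on such a variety predicts that $\QQ$-rational points are not Zariski dense; and a Lang--Weil point count on $\cA_g$ over $\FF_p$.

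The first step is to fix a smooth projective compactification $\overline{\cA}_g$ (for instance a toroidal one \`a la Faltings--Chai), invoke the general-type property together with Bombieri--Lang to produce a proper Zariski-closed subvariety of $\overline{\cA}_g$ containing $\overline{\cA}_g(\QQ)$, and intersect with $\cA_g$ to obtain a proper closed subset $Z\subsetneq\cA_g$ defined over $\QQ$ whose $\QQ$-points include the modular point $[A]$ of every ppav $A$ over $\QQ$. Writing $d=g(g+1)/2=\dim\cA_g$, we then have $\dim Z\leq d-1$.

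The second step is to spread out to integral models. Choose an integral model $\ucA_g$ of $\cA_g$ over $\ZZ[1/N]$ for suitable $N$, and let $\cZ\subset\ucA_g$ be the schematic closure of $Z$. For any prime $p\nmid N$ and any ppav $A$ over $\QQ$ with good reduction at $p$, the valuative criterion extends $[A]\in Z(\QQ)$ to a $\ZZ_{(p)}$-section of $\ucA_g$; since $\cZ$ is closed and contains the generic fibre of this section, the entire section factors through $\cZ$, so $[A_{\FF_p}]\in\cZ(\FF_p)$. If instead $A$ has bad reduction at $p$, then $A_{\FF_p}$ is not an abelian variety, and $A_p\not\cong A_{\FF_p}$ holds automatically for any ppav $A_p$ over $\FF_p$.

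The final step is to count. Lang--Weil applied to $\cA_g$, which is geometrically irreducible of dimension $d$, gives $|\cA_g(\FF_p)|\gg p^d$, whereas $|\cZ(\FF_p)|=O(p^{d-1})$ since $\dim Z<d$. The automorphism group of any ppav of dimension $g$ is finite of order bounded uniformly in $g$, and $\Gal(\overline{\FF}_p/\FF_p)=\widehat{\ZZ}$ has cohomological dimension one, so every $\FF_p$-point of $\cA_g$ is represented by a ppav over $\FF_p$ and the map from $\FF_p$-isomorphism classes of ppavs to $\cA_g(\FF_p)$ has boundedly-finite fibres. Hence the number of $\FF_p$-isomorphism classes of ppavs is $\gg p^d$, while those arising as reductions of ppavs over $\QQ$ account for only $O(p^{d-1})$ classes, and for all sufficiently large $p$ some ppav $A_p$ over $\FF_p$ has its modular point outside $\cZ(\FF_p)$ and is therefore not isomorphic to $A_{\FF_p}$ for any ppav $A$ over $\QQ$. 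The main obstacle is the translation effected in the second step between the geometric Bombieri--Lang output on the compactification and the arithmetic conclusion about reductions on the open $\cA_g$; this requires a careful spreading-out and valuative-criterion argument, but presents no conceptual difficulty once the cited general-type and conjectural inputs are granted.
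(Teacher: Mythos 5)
Your proposal is correct and follows essentially the same route as the paper: general type plus Bombieri--Lang confines the reductions of $\QQ$-ppavs with good reduction to a proper closed subset contributing $O(p^{m-1})$ points of $\cA_g(\FF_p)$ (with $m=g(g+1)/2$), while Lang--Weil gives $\sim p^m$ points arising from ppavs over $\FF_p$. The only difference is cosmetic: where you invoke a general neutrality principle for gerbes over finite fields to lift every point of $\cA_g(\FF_p)$ to a ppav over $\FF_p$, the paper restricts to the open locus where the coarse map is a $\mu_2$-gerbe (discarding only $O(p^{m-1})$ points by Lang--Weil) and uses $\H^2(\FF_p,\mu_2)=(\Br\FF_p)[2]=0$.
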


\subsection{Rationality properties of moduli spaces}
We achieve our arithmetic results through a consideration of the
birational geometry of the coarse moduli space $\cA_g$ of principally
polarised abelian $g$-folds over $\QQ$. This has been much studied
over the complex numbers. It is known that $\cA_g$ unirational over
$\CC$ for $g\le 5$, and rational over $\CC$ for $g\le{3}$. On the
other hand, if $g\ge 7$ then $\cA_g$ is of general type~\cite{Tai},
and $\cA_6$ has non-negative Kodaira dimension~\cite{DSMS}. Questions
about the rationality of $\cA_g$ over non-closed fields seem by
contrast to have had little attention, and we address some of them
here, over $\QQ$.

We first consider unirationality and, incorporating previous
results, obtain the following theorem.

\begin{theorem} \label{thm:unirational}
The coarse moduli space $\cA_g$ of principally polarised abelian
varieties is unirational over $\QQ$ if and only if $g\le 5$.
\end{theorem}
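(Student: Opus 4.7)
My plan is to treat the two implications separately. For the ``only if'' direction, I would argue by Kodaira dimension: unirationality is preserved under base change, so if $\cA_g$ were $\QQ$-unirational for some $g \geq 6$, then $\cA_{g,\CC}$ would be unirational, forcing Kodaira dimension $-\infty$ on any smooth projective compactification. This contradicts the theorems of Tai (for $g \geq 7$, $\cA_{g,\CC}$ is of general type) and Dittmann--Salvati Manni--Sankaran (for $g = 6$, $\kappa(\cA_{6,\CC}) \geq 0$) cited in the introduction.

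For the ``if'' direction, one produces for each $g \in \{1,2,3,4,5\}$ a dominant map over $\QQ$ from a rational $\QQ$-variety to $\cA_g$. The small cases are classical and visibly descend to $\QQ$: namely $\cA_1 \cong \AA^1_\QQ$ via the $j$-invariant; $\cA_2$ is $\QQ$-rational through the binary sextic parametrization (Jacobians of hyperelliptic genus-two curves), as already indicated in \S\ref{subsec:WA_A_2}; and $\cA_3$ is $\QQ$-rational via plane quartics modulo $\PGL_3$, for which a $\QQ$-rational slice exists by the no-name lemma (the paper indeed strengthens this to stable rationality of $\cA_3$). The cases $g = 4$ and $g = 5$ contain the real content: one invokes the classical complex unirationality results obtained via the Prym map $P \colon \cR_{g+1} \dashrightarrow \cA_g$ from moduli of \'etale double covers of curves (Clemens for $g = 4$, Verra for $g = 5$), and checks that these constructions survive descent to $\QQ$.

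The main obstacle is verifying this descent, particularly for $g = 5$. Verra's argument rests on an explicit unirational parametrization of $\cR_6$ built from auxiliary moduli of plane configurations, composed with the generically finite Prym map $\cR_6 \dashrightarrow \cA_5$. What must be checked is that the parametrizing variety is not merely $\overline{\QQ}$-rational but in fact $\QQ$-rational. Because the construction is entirely geometric and $\Gal(\overline{\QQ}/\QQ)$-equivariant, one expects descent to hold, but it typically requires a careful application of the no-name lemma to a $\GL_n$-equivariant slice in order to extract a $\QQ$-rational structure on the quotient, together with the verification that the generic fibre of the parametrization admits a $\QQ$-point. The Prym map itself is defined over $\QQ$, as it arises from functorial operations on polarised abelian varieties, and the case $g = 4$ runs along similar lines but is technically less demanding since the relevant moduli of curves are smaller.
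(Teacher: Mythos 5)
Your ``only if'' direction is exactly the paper's: unirationality descends under base change to $\CC$, where Tai ($g\ge 7$) and Dittmann--Salvati Manni--Scheithauer ($g=6$) rule it out. The cases $g\le 3$ are also essentially fine for \emph{unirationality}, though one correction: rationality of $\cA_3$ over $\QQ$ is not known (Katsylo's rationality proof is over $\CC$, and the paper deliberately proves only the \emph{weaker} statement of stable rationality over $\QQ$ --- stable rationality is implied by rationality, not a strengthening of it). For the theorem at hand this does not matter, since the projective space of plane quartics is defined over $\QQ$ and dominates $\cM_3$, hence $\cA_3$.

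The genuine gap is in $g=4,5$, which is where all the content of the theorem lives: you correctly identify that one must descend a unirational parametrisation of $\cR_{g+1}$ to $\QQ$ and compose with the Prym map, but you then leave the descent itself at ``one expects descent to hold,'' gesturing at a no-name-lemma argument on a $\GL_n$-equivariant slice. That is not how the descent actually goes, and the mechanism you propose does not obviously apply: the parametrisations used (Izadi--Lo Giudice--Sankaran for $g=4$, Farkas--Verra for $g=5$) involve no group quotient at all. Instead one fixes \emph{rational} points $P_1,\dots,P_5\in\PP^3(\QQ)$ (resp.\ $O_1,\dots,O_4\in\PP^2(\QQ)$) and parametrises quartic forms with nodes there (resp.\ a linear system $\PP^{15}$ of $(2,2)$-hypersurfaces in $\PP^2\times\PP^2$); the parameter space is then visibly unirational, resp.\ a projective space, over $\QQ$, and one checks that the conic-bundle discriminant and Stein-factorisation construction of the Prym curve is defined over $\QQ$. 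Two concrete issues your sketch misses: (i) for $g=4$, over a non-closed field the quartic double solid is not determined by the quartic surface --- there are quadratic twists --- so one must parametrise by quartic \emph{forms} $F$ (the affine cone), taking $\{y^2=F(x)\}$, rather than by the surfaces themselves; (ii) the complex proof you name for $g=4$ (Clemens, via intermediate Jacobians) is precisely the one that is hardest to transport to $\QQ$; the Prym-theoretic route is the one that descends. As written, the proposal is a correct plan with the decisive verification omitted.
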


This result has motivation from the recent
paper~\cite{MZ}, in which Masser and Zannier prove various results
on the existence of abelian varieties over $\Qbar$ that are not
isogenous to Jacobians. Some of their  results also hold for
abelian varieties over $\QQ$, but for $g=4$ and $g=5$ that refinement
is conditional on the unirationality of $\cA_g$ over $\QQ$.

From this and \cite[Thm~1.5, Cor~1.6]{MZ} we obtain the following
immediate application, which is new for $g=4,\,5$ (see
\cite[Thm~1.5]{MZ} for a stronger statement).  

\begin{corollary}\label{cor:nonjacobian}
Let $k$ be a number field. For $g=\,4,\,5$, there exists a
principally polarised abelian $g$-fold over $k$ that is Hodge generic
and not isogenous to any Jacobian.
\end{corollary}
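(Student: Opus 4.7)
The plan is to deduce the corollary directly from Theorem~\ref{thm:unirational} together with \cite[Thm~1.5, Cor~1.6]{MZ}. First, Theorem~\ref{thm:unirational} furnishes, for $g \in \{4,5\}$, a dominant rational map $\phi \colon \PP^N \dashrightarrow \cA_g$ defined over $\QQ$; base-changing to the number field $k$ then provides a $k$-unirational parameterisation of $\cA_{g,k}$.

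The second step is to rephrase the target as a genericity statement on $\PP^N(k)$. The set of $[A] \in \cA_g(\Qbar)$ that is either non-Hodge-generic or isogenous over $\Qbar$ to some Jacobian is a countable union $\bigcup_i Z_i$ of proper closed subvarieties of $\cA_g$: the non-Hodge-generic points lie on countably many proper Shimura subvarieties, while the isogenous-to-Jacobian locus is the union of Hecke translates of the Torelli locus $\cJ_g$, which has positive codimension in $\cA_g$ for $g \geq 4$ (dimension $3g-3$ inside $g(g+1)/2$). Pulling back by $\phi$, it suffices to produce a $k$-point of $\PP^N$ lying outside every $\phi^{-1}(Z_i)$.

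Finally, I would invoke \cite[Thm~1.5, Cor~1.6]{MZ}: this is precisely the assertion that, once $\cA_g$ admits a $k$-unirational parameterisation, such a $k$-point of $\cA_g$ can be produced, via a Bertini/Hilbert-irreducibility type argument strong enough to evade a countable collection of proper subvarieties. This is exactly the refinement that in \cite{MZ} had been conditional for $g = 4, 5$. There is therefore no substantive obstacle at the present stage: the whole argument reduces to the unirationality input, and the corollary is a packaging exercise once Theorem~\ref{thm:unirational} is available. The only subtlety worth flagging is that the unirational parameter space produced by Theorem~\ref{thm:unirational} is defined over $\QQ$ (not merely over some large extension), which is exactly what Masser--Zannier need in order to pass from $\Qbar$-points to $k$-points.
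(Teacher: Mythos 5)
Your proposal matches the paper's argument, which is simply to observe that Theorem~\ref{thm:unirational} supplies the unirational parameterisation of $\cA_g$ over $\QQ$ that Masser--Zannier's \cite[Thm~1.5, Cor~1.6]{MZ} had assumed as a hypothesis for $g=4,5$, making their conclusion unconditional. The extra detail you give about why the excluded locus is a countable union of proper subvarieties is a correct gloss on what happens inside \cite{MZ}, but the paper treats the corollary as immediate once the unirationality input is in place.
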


We prove Theorem~\ref{thm:unirational} by showing that the Prym moduli
space $\cR_{g+1}$ is unirational for $g=4,\,5$. This is sufficient to
get unirationality of $\cA_g$, as the Prym map $\cR_{g+1} \to \cA_g$
is dominant for $g \leq 5$ over any field of characteristic not equal
to $2$ \cite[Thm.~6.5]{Bea}.

For deeper rationality properties, it is known that $\cA_2$ is
rational over any field, by work of Igusa \cite{Ig}.  The rationality
of $\cA_3$ over $\CC$ was first proven by Katsylo \cite{Kat} (see also
B\"{o}hning's exposition \cite{Bo}). It seems possible that this
result could also hold over $\QQ$; however Katsylo's proof is
notoriously delicate and technical. We content ourselves with the
following weaker statement, which has a much simpler proof.

\begin{theorem} \label{thm:A3stablyrational}
The coarse moduli space $\cA_3$ is stably rational over $\QQ$.
\end{theorem}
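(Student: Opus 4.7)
The plan is to reduce $\cA_3$ to a linear quotient by $\SL_3$ and then exploit that $\SL_n$ is a special algebraic group in the sense of Serre.

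First I would pass from $\cA_3$ to the moduli of curves: for $g=3$ the Torelli morphism $M_3 \to \cA_3$ is birational over $\QQ$, since both spaces have dimension $6$ and every principally polarised abelian threefold is generically a Jacobian. A general genus $3$ curve is non-hyperelliptic, and its canonical embedding realises it as a smooth plane quartic $C \subset \PP(\H^0(C,K_C)^*) \cong \PP^2$; this embedding depends on a choice of basis of $\H^0(C,K_C)$, so gives a $\PGL_3$-torsor over the isomorphism class of $C$. Hence $\cA_3$ is birational over $\QQ$ to the GIT quotient $\PP V/\PGL_3$, where $V = \H^0(\PP^2, \cO(4))$ is the $15$-dimensional space of ternary quartic forms.

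Next I would lift the projective action to a linear one via the isogeny $\SL_3 \to \PGL_3$. The $\SL_3$-action on $V$ is linear and faithful (the centre $\mu_3 \subset \SL_3$ acts on $V$ by the nontrivial character $\omega \mapsto \omega^4 = \omega$), and generically free (a generic plane quartic has trivial $\PGL_3$-automorphism group, and no non-identity element of $\mu_3$ fixes a nonzero element of $V$). The natural $\Gm$-torsor $V \setminus 0 \to \PP V$ descends to a Zariski locally trivial $\Gm$-torsor $(V \setminus 0)/\SL_3 \to \PP V/\PGL_3$. Hence $V/\SL_3$ is birational over $\QQ$ to $(\PP V/\PGL_3) \times \Gm$, so $V/\SL_3$ is stably birational to $\cA_3$, and it suffices to prove that $V/\SL_3$ is stably rational.

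To finish I would invoke that $\SL_n$ is special: every $\SL_n$-torsor over a scheme is Zariski locally trivial. Applied to the principal $\SL_3$-torsor $V_0 \to V_0/\SL_3$, where $V_0 \subset V$ is the open subset of free action, this gives a birational equivalence $V \sim (V/\SL_3) \times \SL_3$ over $\QQ$. Since $V \cong \AA^{15}$ is rational and $\SL_3$ is a rational variety of dimension $8$ over $\QQ$, we conclude that $(V/\SL_3) \times \AA^8$ is birational to $\AA^{15}$. Therefore $V/\SL_3$, and hence $\cA_3$, is stably rational over $\QQ$. The main point to watch is that all of the birational equivalences above must be defined over $\QQ$ and not only over $\bar\QQ$; however, the speciality of $\SL_n$ is a field-independent statement that removes the usual Brauer-class obstructions, so no arithmetic subtleties arise.
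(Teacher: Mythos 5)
Your proposal is correct and follows essentially the same route as the paper: pass via Torelli to plane quartics modulo $\PGL_3$, lift to the linear $\SL_3$-action on the space of ternary quartics (where the centre $\mu_3$ acts faithfully because $\gcd(4,3)=1$), and use triviality of $\SL_3$-torsors together with rationality of $\SL_3$ and Hilbert~90 for the residual $\Gm$-torsor. The only cosmetic difference is that you invoke speciality of $\SL_n$ where the paper uses $\H^1(k,\SL_{n+1})=0$ applied to the generic fibre, which is the same fact.
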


\subsection{Weak approximation for algebraic stacks}
Theorem \ref{thm:A3stablyrational} is the key geometric input for the
proof of Theorem \ref{thm:prescribedreductions}. Namely, one can
interpret Theorem \ref{thm:prescribedreductions} as a version of
\emph{weak approximation} for the moduli stack $\ucA_3$ of principally
polarised abelian threefolds. Recall that a smooth variety $X$ over a
number field $k$ is said to satisfy \emph{weak approximation} if
$X(k)$ is dense in $\prod_v X(k_v)$ where the product is over all
places $v$ of $k$. We prove a version of this for the stack $\ucA_3$
(see \S\ref{sec:WA} for definitions for stacks).

\begin{theorem} \label{thm:A3weakapprox}
The stack $\ucA_3$ satisfies weak approximation over any number field $k$.
\end{theorem}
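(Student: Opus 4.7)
The strategy is to bootstrap the stable rationality of $\cA_3$ (Theorem \ref{thm:A3stablyrational}) into weak approximation at the level of the stack, via an auxiliary versal family of ppavs. First I would invoke the general principle that any smooth proper stably $k$-rational variety $X$ satisfies weak approximation: writing $X \times \PP^m$ as $k$-birational to $\PP^N$, given prescribed local points $(x_v)_{v \in S}$, one picks any $y_v \in \PP^m(k_v)$ and uses weak approximation on $\PP^N$ (trivially true) to approximate the pair $(x_v, y_v)$ by a $k$-point of $X \times \PP^m$; projecting onto $X$ yields the desired $k$-point of $X$ near each $x_v$. Applied to a smooth proper model of $\cA_3$, this delivers WA on (a dense open of) $\cA_3$.

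Next I would promote this to WA on the stack using a versal family $\cX \to V$, with $V$ a smooth $k$-scheme admitting a smooth surjection $V \to \ucA_3$ and itself satisfying WA. A natural candidate for $V$ is extracted from the geometric constructions underpinning the unirationality/stable-rationality results for $\cA_3$ --- for instance, the Torelli parametrisation (generic ppavs of dimension $3$ being Jacobians), the Prym map $\cR_4 \to \cA_3$, or the addition of an auxiliary level structure that rigidifies automorphisms. Given prescribed ppavs $A_v/k_v$ for $v \in S$, smoothness of $V \to \ucA_3$ provides lifts $u_v \in V(k_v)$ with $\cX_{u_v} \cong A_v$; WA on $V$ yields $u \in V(k)$ close to each $u_v$, and $\cX_u$ is then a ppav over $k$ approximating all the $A_v$ simultaneously.

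The main obstacle is to assemble a versal base $V$ with all of these properties at once. Simply lifting a $k$-point of $\cA_3$ to a $k$-point of $\ucA_3$ is obstructed by Galois cohomology --- over the $\{\pm 1\}$-automorphism locus the forgetful map is a $\mu_2$-gerbe, with lifting controlled by $\H^2(k,\mu_2)=\Br(k)[2]$. The versal family circumvents this by furnishing explicit trivialisations of the gerbe locally at each $v$, but one still has to ensure both that $V$ inherits WA from the stable rationality of $\cA_3$, and that even reducible or special $A_v$ (products of lower-dimensional ppavs, or those with extra automorphisms) can be realised or suitably approximated as fibres. The specific low-dimensional geometry of $\cA_3$ --- in particular the dominance of the Torelli/Prym map and the tractable automorphism groups --- is what ultimately makes the construction feasible in dimension $3$.
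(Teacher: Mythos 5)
Your first step (weak approximation for the coarse space $\cA_3$ from its stable rationality) matches the paper. But the second step --- promoting this to the stack --- is where your proposal has a genuine gap: you correctly identify the obstruction (the generic $\mu_2$-gerbe, field of moduli versus field of definition, and the quadratic-twist ambiguity), but you do not resolve it. You defer everything to an unspecified ``versal family'' $\cX \to V$ with $V$ a smooth scheme surjecting onto $\ucA_3$ and satisfying weak approximation, and you explicitly list the construction of such a $V$ as the ``main obstacle.'' None of your candidates works as stated. The Torelli parametrisation fails because $\ucM_3 \to \ucA_3$ is \emph{not} birational (the generic stabilisers are $\Aut C$ versus $\Aut C \oplus \ZZ/2\ZZ$), and concretely a quadratic twist of a Jacobian need not be a Jacobian (Lemma~\ref{lem:nonjacobian3fold}); so a prescribed local point $A_v \in \ucA_3(k_v)$ need not lift to your $V(k_v)$ at all, and smooth surjectivity of $V \to \ucA_3$ as a map of stacks does not give surjectivity on $k_v$-points. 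Adding level structure rigidifies automorphisms but replaces $\cA_3$ by a finite cover that is not known to be stably rational or to satisfy weak approximation, so $V$ would not ``inherit WA.''

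The missing idea is Shimura's theorem: for \emph{odd} $g$ the generic ppav admits a model over its field of moduli, so over a dense open $V \subset \cA_3$ the map $h^{-1}(V) \to V$ is a \emph{neutral} $\mu_2$-gerbe (Lemma~\ref{lem:neutralgerbe}). Birationally, $\ucA_3$ therefore fibres over $\cA_3$ with fibre $B\mu_2$, and $B\mu_2$ satisfies weak approximation by Kummer theory ($\H^1(k,\mu_2)=k^\times/k^{\times 2}$ together with weak approximation for $k^\times$, Lemma~\ref{lem:mu_2}); this is exactly what disposes of the twist ambiguity you worry about. One then concludes by the stacky fibration lemma (Lemma~\ref{lem:WA_fibration}) and birational invariance of weak approximation for smooth stacks (Lemma~\ref{lem:WA_birational}). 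Without the neutrality input --- which is special to odd $g$ and is the crux of the whole argument --- your plan cannot be completed along the lines you sketch.
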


The classical weak approximation theorem implies that rational
varieties satisfy weak approximation, and a fibration argument extends
this to stably rational varieties. However Theorem
\ref{thm:A3stablyrational} does not immediately imply Theorem
\ref{thm:A3weakapprox}; indeed there are algebraic stacks that fail
weak approximation but whose coarse moduli space satisfies weak
approximation (see Example~\ref{ex:GW}). To exclude this possibility
for $\ucA_3$ one needs to be careful with twists.  Theorem
\ref{thm:prescribedreductions} is then an application of this weak
approximation result.

\subsection{Questions}
Our results raise the following, to which we do not know the answer.

\begin{question}\label{qu:weakweak}
Do $\cA_4$ and $\cA_5$ satisfy weak weak approximation (i.e.\ weak
approximation away from a finite set of places)?
\end{question}

Standard conjectures in arithmetic geometry, together with the
unirationality from Theorem~\ref{thm:unirational}, would imply a
positive answer to Question~\ref{qu:weakweak}. This in turn would give
a version of Theorem~\ref{thm:A3weakapprox} for $\ucA_5$, and a
version of Theorem~\ref{thm:prescribedreductions} for $g=5$ away from
finitely many primes. The case of $\ucA_4$ is less clear as here the
generic gerbe is non-neutral (see Lemma~\ref{lem:neutralgerbe}).

\begin{question}\label{qu:mildBL}
Can one prove an unconditional version of Theorem~\ref{thm:BL}?
\end{question}

\subsection*{Conventions}
We denote by $\ucA_g$ the moduli stack of principally polarised
abelian $g$-folds over $\ZZ$ and by $\cA_g$ its coarse moduli
space. The stack $\ucA_g$ is smooth over $\ZZ$ \cite[Thm.~2.4.1]{Oor}.
We sometimes abuse notation and also denote by $\ucA_g$ the base
change of the stack to some field, which will be clear from the
context.

For a group scheme $G$ we denote by $BG$ the associated classifying
stack. Regarding gerbes, we use the conventions of
\cite[Ch.~12]{Ols}. If $G$ is an abelian group scheme, we say that a
$G$-gerbe is neutral if it has a section; this is equivalent to being
isomorphic to $BG$.

For an algebraic stack $\ucX$ and a scheme $S$, we abuse notation and
denote by $\ucX(S)$ the \textit{set} of isomorphism classes of
$S$-points of $\ucX$ (rather than the groupoid).

\subsection*{Acknowledgements}

We are grateful to Christian B\"{o}hning, Davide Lombardo and Siddharth Mathur for useful
discussions. Christian largely taught us how to prove
Theorem~\ref{thm:A3stablyrational}. We are also grateful to David
Masser and Umberto Zannier, who asked about the unirationality of $\cA_4$. Daniel Loughran
was supported by UKRI Future Leaders Fellowship
\texttt{MR/V021362/1}.

\section{Background on moduli}\label{sec:background}

In this preliminary section we outline the known results over the
complex numbers and make some remarks about certain moduli spaces
associated with curves, which we shall use as auxiliaries.

Let $k$ be a field. Recall that a variety $X$ over $k$ is called
\emph{rational} (resp.~\emph{unirational}) if it admits a birational
(resp.~dominant rational) map from a projective space. It is called
\emph{stably rational} if $X \times \PP^n$ is rational for some $n$.

\subsection{Known rationality results}\label{subsec:algclosed}

The coarse moduli spaces $\cA_g$ have been much studied over the
complex numbers from a birational point of view. With the exception of
the case $g=6$, the broad picture remains as described in~\cite{HS},
to which we refer for more details.

\subsubsection{$g=2$}
Igusa~\cite[Thm.~5]{Ig} showed that $\cA_2 = \cM_2$ is rational over
any field, and there are numerous results on the moduli space for
abelian surfaces with non-principal polarisations or level structures.

\subsubsection{$g=3$}
It is easy to see that $\cA_3$ is unirational over
$\CC$. Katsylo~\cite{Kat} showed that $\cM_3$ (and hence $\cA_3$, by
Torelli) is rational over $\CC$: we discuss this
in~\S\ref{subsec:curves} below.

\subsubsection{$g=4$}
The first proof that $\cA_4$ is unirational over $\CC$ was given by
Clemens~\cite{Cle}, using intermediate Jacobians. Other proofs were
subsequently given by Verra~\cite{Ver1} and by Izadi, Lo Giudice and
the second author~\cite{ILS}. Of these, the proof in~\cite{ILS}, which
uses the moduli space of Prym curves (see \S\ref{subsec:prym} below),
seems the easiest to adapt to non-closed fields.

\subsubsection{$g=5$}
The first proofs that $\cA_5$ is unirational over $\CC$ were given by
Donagi~\cite{Don} and by Mori and Mukai~\cite{MM}. There is a
different proof in \cite{Ver2} and a more recent one by Farkas and
Verra~\cite{FV16}. All of these use Prym curves. We found the argument
in \cite{FV16} the easiest to adapt for our purpose.

\subsubsection{$g\ge 6$}
The spaces $\cA_g$ are non-rational for $g>5$, at least in
characteristic zero. Tai~\cite{Tai}, building on earlier work of
Mumford and of Freitag, showed that $\cA_g$ is of general type for
$g\ge 7$. The case of $\cA_6$ remained completely mysterious until
2020, when Dittman, Salvati Manni, and Scheithauer~\cite{DSMS} showed
that the second plurigenus is positive: thus the Kodaira dimension is
non-negative.

\subsection{Moduli of curves}\label{subsec:curves}
In view of Katsylo's result, it is natural to ask whether $\cA_3$ is
rational over $\QQ$. What Katsylo proves directly, however, is that
$\cM_3$ is rational (over $\CC$), and the wider context into which the
proof naturally fits is the moduli of curves rather than of abelian
varieties. Katsylo's proof is one of many rationality proofs for
moduli spaces of curves: for some examples, see~\cite{Ver3}. The main
tool for many of these is classical invariant theory. Few of them are
written with much attention to fields of definition. Shepherd-Barron's
proof~\cite{S-B} that $\cM_6$ is rational over $\QQ$ is an
exception. In fact, Katsylo's proof for $\cM_3$ is among the most
complicated of these arguments.

\subsection{Prym curves}\label{subsec:prym}
A Prym curve (of genus $g$) over a scheme $S$ is a pair $(D,C)$
where $C$ is a smooth proper scheme over $S$ whose fibres are smooth
projective geometrically integral curves of genus $g$, and $D \to C$
is a finite \'etale morphism of degree $2$ whose fibres over $S$ are
geometrically integral. We denote by $\ucR_g$ the moduli stack of Prym
curves of genus $g$  and by $\cR_g$ its coarse moduli space.

An explicit construction of this stack over $\ZZ[1/2]$ can be found in
the proof of \cite[Thm.~6.5]{Bea}, which also constructs a morphism
(the Prym map) $\ucR_g \to \ucA_{g-1}$ of stacks over
$\ZZ[1/2]$. Moreover, $\ucR_g \to \ucA_{g-1}$ is dominant for $g \leq
6$, by \cite[Lem.~6.5.2]{Bea}.

\section{Rationality results}

\subsection{Stable rationality of moduli of hypersurfaces}\label{subsec:hypersurfaces}
We will prove that $\cA_3$ is stably rational by showing that the
moduli of plane quartic curves is stably rational. Our argument is
sufficiently robust that it also works for other moduli of
hypersurfaces. Our result is as follows.

\begin{theorem} \label{thm:stablyrational}
Let $d,n \geq 2$ with $(d,n) \neq (3,2)$. Let $k$ be a field and
$\cH_{d,n}$ denote the Hilbert scheme of hypersurfaces of degree $d$
in $\PP^n$. The group $\PGL_{n+1}$ acts on $\cH_{d,n}$ in a natural
way via linear change of variables.  Assume that $\characteristic(k)$
does not divide $n+1$ and that $\gcd(d,n+1)=1$. Then
$\cH_{d,n}/\PGL_{n+1}$ is stably rational.
\end{theorem}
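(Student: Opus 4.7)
The plan is to compare the quotient $\cH_{d,n}/\PGL_{n+1}$ with a linear quotient $V/\SL_{n+1}$. Set $V = H^0(\PP^n, \cO(d))$ so that $\cH_{d,n} \cong \PP(V)$, and let $\GL_{n+1}$ act on $V$ by $(g \cdot f)(x) = f(g^{-1} x)$. The central $\Gm \subset \GL_{n+1}$ acts on $V$ with weight $-d$, so the centre $\mu_{n+1} \subset \SL_{n+1}$ acts through the character $\zeta \mapsto \zeta^{-d}$. Because $\gcd(d, n+1) = 1$ this character is primitive, and hence $\SL_{n+1}$ acts faithfully on $V$.

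First I would reduce the stable rationality of $\PP(V)/\PGL_{n+1}$ to that of $V/\SL_{n+1}$. The $\Gm$-torsor $V \setminus \{0\} \to \PP(V)$ is $\SL_{n+1}$-equivariant, covering the $\PGL_{n+1} = \SL_{n+1}/\mu_{n+1}$-action on the base, so on passing to the quotients one obtains a $\Gm$-torsor $V/\SL_{n+1} \to \PP(V)/\PGL_{n+1}$ on a non-empty open subset. By Hilbert~90, any $\Gm$-torsor is generically trivial, so $V/\SL_{n+1}$ is birational to $\PP(V)/\PGL_{n+1} \times \Gm$. It thus suffices to show that $V/\SL_{n+1}$ is stably rational.

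Next I would verify that $\SL_{n+1}$ acts generically freely on $V$ when $d \geq 3$. By the classical theorem of Matsumura--Monsky, for $d \geq 3$, $n \geq 2$ and $(d, n) \neq (3, 2)$, a generic smooth hypersurface of degree $d$ in $\PP^n$ has trivial projective automorphism group. Hence $\PGL_{n+1}$ acts with trivial generic stabiliser on $\PP(V)$, and the $\SL_{n+1}$-stabiliser of a generic $v \in V$ maps injectively to this stabiliser, so lies in $\mu_{n+1}$; the faithfulness of the $\mu_{n+1}$-action then forces it to be trivial. For $d = 2$ the argument breaks (the generic $\SL_{n+1}$-stabiliser is $\SO_{n+1}$), but in that case $\cH_{2,n}/\PGL_{n+1}$ is zero-dimensional, since all smooth quadrics in $\PP^n$ are projectively equivalent over $\bar k$, and hence is trivially rational.

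Finally, since $\SL_{n+1}$ is a \emph{special} linear algebraic group in the sense of Serre--Grothendieck, every $\SL_{n+1}$-torsor is Zariski-locally trivial, so the generically free action produces a birational equivalence $V \sim (V/\SL_{n+1}) \times \SL_{n+1}$. Both $V \cong \AA^N$ and $\SL_{n+1}$ are rational varieties, so $V/\SL_{n+1}$ is stably rational; combined with the reduction of the second paragraph this yields the stable rationality of $\cH_{d,n}/\PGL_{n+1}$. I expect the main obstacle to be the generic freeness step: the exclusion $(d, n) \neq (3, 2)$ is essential (smooth plane cubics retain a projective automorphism group $E[3] \rtimes \ZZ/2$), and the hypothesis $\characteristic(k) \nmid n+1$ is needed to keep $\mu_{n+1}$ \'etale so that $\PGL_{n+1} = \SL_{n+1}/\mu_{n+1}$ is a smooth group quotient.
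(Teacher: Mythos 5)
Your proposal is correct and follows essentially the same route as the paper: compare $\cH_{d,n}/\PGL_{n+1}$ with $V/\SL_{n+1}$ via the $\Gm$-torsor and Hilbert~90, use faithfulness of the central $\mu_{n+1}$ together with triviality of generic projective automorphisms to get generic freeness, and conclude from the vanishing of $\H^1(k,\SL_{n+1})$ and the rationality of $\SL_{n+1}$. The only cosmetic differences are that you cite Matsumura--Monsky where the paper cites Poonen for trivial automorphism groups, and you order the two reduction steps in reverse.
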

\begin{proof}
The case $d = 2$ is classical so we assume $d > 2$.  Let
$V_{d,n}=\H^0(\PP^n, \cO_{\PP^n}(d))$ be the vector space of forms of
degree $d$ in $(n+1)$ variables.  The group $\SL_{n+1}$ acts on
$V_{d,n}$ in a natural way via linear change of variables. The
coprimality conditions ensure that the central copy of $\mu_{n+1}
\subset \SL_{n+1}$ acts faithfully on $V_{d,n}$. Moreover the induced
action of $\SL_{n+1}$ on $\cH_{d,n}$ factors through the
$\SL_{n+1}/\mu_{n+1}=\PGL_{n+1}$-action. We obtain the commutative
diagram
\[
\xymatrix{ V_{d,n}\setminus\{0\} \ar[r] \ar[d] & \cH_{d,n} \ar[d]
  \\ \big(V_{d,n}\setminus\{0\}\big)/\SL_{n+1} \ar[r] & \cH_{d,n}/\PGL_{n+1}. }
\]
The action of $\SL_{n+1}$ on $V_{d,n}$ is generically free: indeed
$\mu_{n+1}$ acts faithfully and the action of $\PGL_{n+1}$ on
$\cH_{d,n}$ is generically free as there exist hypersurfaces of degree
$d$ with trivial linear automorphism group \cite{Poo}.  We conclude
that the generic fibre of the left-hand map is an
$\SL_{n+1}$-torsor. But such a torsor is necessary trivial as
$\H^1(k,\SL_{n+1}) = 0$ for any field $k$ \cite[Lem.~2.3]{PR}. We
conclude that $V_{d,n}$ is birational to $\SL_{n+1} \times (V_{d,n}
/\SL_{n+1})$.

However, $\SL_{n+1}$ is a split reductive group, and thus rational. To
see this (well-known) fact, note that a Borel subgroup $B$ has an
opposite $B'$ such that $B\cap B'$ is a (split) maximal torus. The
product $B\cdot R_u(B')$, the big cell in the Bruhat decomposition, is
then rational as it is the product of copies of $\AA^1$ and
$\AA^1\setminus\{0\}$: see \cite[pp.~98--99]{Mil2}. Thus $V_{d,n}
/\SL_{n+1}$ is stably rational.

Next, the top arrow in the diagram is a $\Gm$-torsor. The bottom arrow
is thus a $\Gm/\mu_{n+1} = \Gm$-torsor (here the map $\mu_{n+1} \to
\Gm$ is the one induced by the $\SL_{n+1}$-action and not necessarily
the standard embedding). By Hilbert's Theorem~90 the generic fibre is
the trivial torsor: thus we see that $\cH_{d,n}/\PGL_{n+1}$ is stably
birational to $V_{d,n}/\SL_{n+1}$, which we already proved is stably
rational.
\end{proof}

\subsection{Abelian $3$-folds}\label{subsec:A3}

Let $k$ be a field of characteristic not equal to $3$.

The rational maps $\cH_{4,2}/\PGL_3 \dashrightarrow \cM_3
\dashrightarrow \cA_3$ over $k$ are birational over $\kbar$ (this
follows from Torelli and a dimension count) and hence birational over
$k$. Therefore $\cA_3$ is stably rational over $k$ by
Theorem~\ref{thm:stablyrational}.  This is sufficient for
Theorem~\ref{thm:A3stablyrational}.

\subsection{Abelian 4-folds}\label{subsec:A4}

Let $k$ be a field of characteristic $0$. We prove the $g=4$ case of
Theorem \ref{thm:unirational}.

For the following result we closely follow~\cite{ILS}, which proves
the analogous result over $\CC$, but a few modifications are
required. The main difference is that in~\cite{ILS}, the authors work
with certain quartic surfaces, and then pass to the double covers of
$\PP^3$ ramified along the quartic surface. Some care is needed over
non-algebraically closed fields where there may be many such covers
given by quadratic twists.

\begin{theorem}\label{thm:R5A4unirational}
The coarse moduli space $\cR_5$ of \'etale double covers of a genus
$5$ curve is unirational over $k$.  Hence $\cA_4$ is unirational over
$k$.
\end{theorem}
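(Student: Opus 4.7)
The plan is to follow the approach of Izadi--Lo Giudice--Sankaran \cite{ILS}, which establishes the unirationality of $\cR_5$ over $\CC$, and adapt it carefully to an arbitrary characteristic $0$ field $k$. Once unirationality of $\cR_5$ is established over $k$, the unirationality of $\cA_4$ follows immediately from the fact, recalled in \S\ref{subsec:prym}, that the Prym map $\cR_5 \to \cA_4$ is dominant over any field of characteristic different from $2$.

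The construction in \cite{ILS} parameterizes Prym curves of genus $5$ using a family built from the linear system of quartic surfaces in $\PP^3$, together with some auxiliary linear-algebraic data (incidence conditions cutting out loci in appropriate flag varieties or Grassmannians). The resulting parameter space $U$ is assembled from projective spaces and Grassmannian-type constructions, and is therefore rational over any field. To each $k$-point of $U$, the recipe of \cite{ILS} assigns a Prym curve by passing from the quartic surface $S \subset \PP^3$ to a double cover $\pi\colon X \to \PP^3$ branched along $S$, constructing a specific curve $\tilde C$ on $X$, and using the restriction of the covering involution $\iota$ of $\pi$ to endow $\tilde C \to C := \tilde C / \langle \iota \rangle$ with the structure of an \'etale double cover of a smooth genus $5$ curve.

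The principal obstacle, and the main point of departure from \cite{ILS}, is that over a non-algebraically closed $k$ the double cover of $\PP^3$ branched along $S = \{f = 0\}$ is not unique: each $d \in k^\times / k^{\times 2}$ gives a quadratic twist $X_d = \{d w^2 = f\} \subset \PP(1,1,1,1,2)$, and distinct twists generally produce non-isomorphic Prym curves over $k$. To resolve this, instead of working over the linear system $|\cO_{\PP^3}(4)|$ one works with the affine cone $V_{4,3} \setminus \{0\}$ of nonzero quartic forms, on which the canonical double cover $\{w^2 = f\}$ is a well-defined family. Replacing $U$ by the corresponding $\Gm$-bundle $\tilde U$ preserves rationality, and the remainder of the ILS construction carries through in families over $\tilde U$, yielding a $k$-morphism $\tilde U \dashrightarrow \cR_5$. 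Dominance of this morphism is a geometric property and can be verified after base change to $\kbar$, where it reduces to the result proved in \cite{ILS}; the hardest part in practice is checking that the intermediate constructions (choice of curve $\tilde C$ on $X$, identification of the quotient as a genus $5$ curve) are performed in such a way that they make sense relatively in families over $\tilde U$, with the chosen canonical twist, and remain generically smooth there.
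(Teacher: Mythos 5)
Your proposal is correct and follows essentially the same route as the paper: it identifies the key obstacle (the quadratic twists of the double cover of $\PP^3$ branched along the quartic) and resolves it exactly as the paper does, by working with the affine cone of quartic forms so that $\{y^2=F\}$ is canonical, then checking dominance after base change to $\kbar$ and concluding via the Prym map. The only cosmetic difference is that the paper establishes unirationality of this cone $\cQ'$ directly via an incidence variety $B_0\subset V\times\PP^3$ whose fibres over $\PP^3$ are vector spaces, rather than by appealing to rationality of the ILS parameter space and passing to a $\Gm$-bundle.
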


In the rest of this section, we prove the first part of this: the
second part follows since the Pyrm map $\cR_5 \to \cA_4$ is dominant
and defined over $k$.  

Fix five points $P_1,\dots,P_5 \in \PP^3(k)$ in linear general
position. Without loss of generality $P_1 =(0:0:0:1)$. We consider the
space $\cQ'\subset \H^0(\cO_{\PP^3}(4))$ of quartic forms $F$ in four
variables such that the associated quartic surface $X:=\{x\in
\PP^4\mid F(x) = 0\}$ has exactly six ordinary double points over the
algebraic closure, at least five of which are $P_1,\dots,P_5$. This is
a quasi-affine variety defined over $k$. Moreover, if $F$ is defined
over $k$ then necessarily the sixth double point is also defined over
$k$.  The key result is as follows.

\begin{lemma} \label{lem:Q_unirational}
The space $\cQ'$ is geometrically irreducible and unirational.
\end{lemma}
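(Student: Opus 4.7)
The plan is to parameterise $\cQ'$ by the choice of the $6$th ordinary double point, realising it as birational to a vector bundle over an open subset of $\PP^3_k$. Let $V = \H^0(\PP^3, \cO_{\PP^3}(4))$, of dimension $35$, and consider the incidence variety
\[
\cI = \bigl\{(P, F) \in \PP^3 \times V : F \text{ is singular at each of } P_1, \ldots, P_5 \text{ and at } P\bigr\}.
\]
The first projection $\pi_1 \colon \cI \to \PP^3$ has fibre over $P$ the linear subspace $L(P) \subseteq V$ cut out by the $24$ linear conditions $\partial_j F(P_i) = 0$ for $i = 1, \ldots, 5, 6$ (with $P_6 = P$) and $j = 0, \ldots, 3$; by Euler's identity the vanishing $F(P_i) = 0$ is then automatic.

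The main technical step is to show that for $P$ in a dense open subset $U \subseteq \PP^3$, these $24$ conditions are linearly independent, so that $\dim L(P) = 11$. This is a special case of the Alexander--Hirschowitz theorem: the triple $(s, n, d) = (6, 3, 4)$ is not among the four exceptional cases, so six general double points in $\PP^3$ impose independent conditions on quartics. The locus $U$ is Zariski-open and defined over $k$ by non-vanishing of an explicit minor of the relevant coefficient matrix; it is non-empty provided $(P_1, \ldots, P_5)$ is sufficiently generic, which one may arrange by a mild strengthening of ``linear general position'' if needed.

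Over $U$, $\pi_1^{-1}(U) \to U$ is a rank-$11$ vector bundle. Trivialising it on a smaller open $U' \subseteq U$ gives $\pi_1^{-1}(U') \cong U' \times \AA^{11}$, which is $k$-rational and geometrically irreducible. The second projection $\pi_2 \colon \cI \to V$, restricted to the dense open subset $\cI^{\circ} \subseteq \cI$ on which $P \notin \{P_1, \ldots, P_5\}$ and $V(F)$ has exactly six ordinary double points, is a birational morphism onto $\cQ'$: it is surjective because the sixth ODP of any $F \in \cQ'$ is $k$-rational (by the degree count on the singular locus noted in the excerpt), and injective because that ODP is uniquely determined by $F$. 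Both varieties are $14$-dimensional, so the map is birational. Hence $\cQ'$ is birational to $U' \times \AA^{11}$; in particular it is $k$-rational, \emph{a fortiori} unirational, and geometrically irreducible.

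The main obstacle is the dimension count in the second paragraph: verifying the linear independence of the $24$ nodal conditions for our configuration of five fixed points and one variable point. This is handled by Alexander--Hirschowitz once one confirms compatibility with the paper's specific choice of $(P_1, \ldots, P_5)$, a routine Zariski-open check.
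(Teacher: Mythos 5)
Your proof follows essentially the same route as the paper's: both parametrise the sixth node by an incidence variety over $\PP^3$ whose fibres under the first projection are linear subspaces of the space of quartics singular at $P_1,\dots,P_5$, trivialise this family over a dense open subset, and project back to $\cQ'$. The paper's version is softer in two respects: it does not need the exact fibre dimension (since the fibres are vector spaces, semicontinuity alone gives a dense open over which the bundle is trivial, so Alexander--Hirschowitz is not required), and it only uses that $\cQ'$ is dense in the image of the second projection rather than your stronger --- and, in characteristic $0$, correct --- observation that the second projection is generically injective, which upgrades the conclusion to rationality of $\cQ'$.
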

\begin{proof}
We follow \cite[Prop.~2.1]{ILS}, which proves unirationality over the
algebraic closure (our $\cQ'$ is the affine cone over the $\cQ$
appearing in~\cite{ILS}). The space of quartic polynomials $F$ whose
associated quartic surface has at least five double points at
$P_1,\dots,P_5$ is a vector space which we denote by $V$. For the
sixth double point, consider the scheme
\[
B_0:= \{ (F, P_0) \in V \times \PP^3 \mid F(P_0) = \partial F/\partial
x_0 (P_0) = \dots = \partial F/\partial x_3(P_0) = 0\}.
\]
The projection onto $\PP^3$ is surjective and the fibres are vector
spaces (not just affine spaces: there is a section, the zero
section). It follows that there is a dense open $U\subset \PP^3$ over
which $B_0$ becomes isomorphic to $U \times \AA^n$ for some $n$, and
therefore $B_0$ is rational.
	
The other projection is a map $B_0 \to V$ whose image contains $\cQ'$
as a dense open subset. As $B_0$ is rational, we see that $\cQ'$ is
unirational.
\end{proof}

There is a natural map
$
\varrho\colon \cQ' \To \cR_5
$
defined in the following way \cite[\S1]{ILS}. For $F\in\cQ'$, let $X$ be the
associated quartic surface and
\[
\Lambda_X:=\{y^2 = F(x)\} \subset \PP(1,1,1,1,2)
\]
the associated quartic double solid. Let $W_X$ be the blow-up of
$\Lambda_X$ at $(0:0:0:1:0)$. Then composing the blow-up map with the
natural projection to $\PP^2$ yields a morphism $f\colon W_X \to
\PP^2$. This is a conic bundle morphism whose non-smooth locus $C_X
\subset \PP^2$ is a plane sextic curve whose singular points are
exactly the images of the singular points of $X$ (see \cite[Prop.~1.5,
  1.6]{ILS}). Thus if $F$ is general, $C_X$ has five ordinary double
points, so its normalisation $\widetilde{C}_X$ has genus~$5$.

Next let $S= W_X \times_{\PP^2} \widetilde{C}_X$ and let
$\widetilde{S}$ be the normalisation of $S$. We then apply Stein
factorisation to the induced map $\widetilde{S} \to \widetilde{C}_X$
to obtain a finite morphism $\Gamma_F \to \widetilde{C}_X$, which by
\cite[Prop.~1.8]{ILS} is a geometrically connected finite \'etale
cover of degree $2$. Thus the pair $(\widetilde{C}_X, \Gamma_F)$ gives
a well-defined element of $\cR_5$. All these constructions are
natural, so we define the morphism $\varrho$ by
$\varrho(F)=(\Tilde{C}_X,\Gamma_F)$.

To show that $\cR_5$ is unirational, we note from \cite[Cor.~3.3]{ILS}
that $\cQ'\to \cR_5$ is dominant over $\kbar$, hence it is dominant
over $k$, whence $\cR_5$ is unirational by
Lemma~\ref{lem:Q_unirational}. As $\cR_5 \to \cA_4$ is dominant,
$\cA_4$ is also unirational. \qed

\subsection{Abelian 5-folds}\label{subsec:A5}

Let $k$ be a field of characteristic $0$. We prove the $g=5$ case of
Theorem \ref{thm:unirational}.

We proceed as in \S\ref{subsec:A4}, this time following
\cite[\S1]{FV16}, which proves the analogous result over $\CC$. This
time the method extends to $k$ with little difficulty.

\begin{theorem}\label{thm:R6A5unirational}
The coarse moduli space $\cR_6$ of \'etale double covers of a genus
$6$ curve is unirational over $k$.  Hence $\cA_5$ is unirational over
$k$.
\end{theorem}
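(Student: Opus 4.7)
The plan is to adapt the argument of \cite[\S1]{FV16} to the non-algebraically closed setting, in the same spirit as our treatment of the $g=4$ case in \S\ref{subsec:A4}. Farkas and Verra exhibit a parameter space $\cQ'$ of explicit projective-geometric data (a suitable nodal configuration from which a Prym curve of genus $6$ is extracted) together with a natural construction $\varrho\colon \cQ' \to \cR_6$, and they show that $\varrho$ is dominant over $\CC$. My first step is to observe that $\cQ'$ is cut out in an affine space by polynomial conditions that make sense over any field of characteristic $0$, so it is automatically defined over $k$. After rigidifying the Farkas--Verra construction by fixing a $k$-rational configuration of base points in general linear position in the relevant ambient projective space (such a configuration exists because $k$ is infinite), the space $\cQ'$ becomes a linear fibration over a dense open subset of a projective space, and so is unirational over $k$.

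The second step is to define $\varrho$ over $k$. This requires checking that each intermediate object in the Farkas--Verra construction---blow-ups, normalisations of fibre products, Stein factorisations, and the resulting \'etale double cover---is canonically attached to the point $F \in \cQ'(k)$, so that no quadratic twist ambiguity arises when one descends from $\kbar$ to $k$. As in \S\ref{subsec:A4}, this amounts to extracting the Prym double cover intrinsically from the universal family over $\cQ'$, rather than via an auxiliary choice of square root. Once $\varrho$ is defined over $k$, its dominance over $k$ follows from dominance over $\kbar$, which in turn reduces to \cite{FV16} via base change. Unirationality of $\cR_6$ over $k$ then follows from unirationality of $\cQ'$, and composing with the Prym map $\cR_6 \to \cA_5$ of \cite[Thm.~6.5]{Bea}, which is dominant for $g \leq 6$, yields unirationality of $\cA_5$ over $k$.

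The main obstacle I expect is precisely the canonicity of the \'etale double cover: one must verify that the cover produced by the Farkas--Verra recipe is the tautological one and not a quadratic twist, so that $\varrho$ is well defined on $k$-points and not merely on $\kbar$-points. In the $g=4$ case this was ensured by working with the double solid $\{y^2 = F(x)\}$, which is tautologically defined from $F$, together with functorial blow-up, base change, and Stein factorisation; I expect an analogous intrinsic description of the double cover is available in the Farkas--Verra setting, so that the verification, while requiring care at each step, is essentially routine.
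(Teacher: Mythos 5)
Your proposal follows essentially the same route as the paper: fix a $k$-rational configuration of base points, adapt the Farkas--Verra construction over $k$, note that the conic bundle and Stein factorisation produce the \'etale double cover canonically (so no quadratic-twist ambiguity arises), and deduce dominance over $k$ from dominance over $\kbar$. The only difference is that the actual Farkas--Verra parameter space is even simpler than you anticipate --- it is the linear system $\PP^{15}$ of bidegree $(2,2)$ hypersurfaces in $\PP^2\times\PP^2$ nodal at four fixed $k$-points $P_i=(O_i,O_i)$, hence already a projective space over $k$ --- so the unirationality of the parameter space is immediate and no fibration argument is needed.
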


Pick four points $O_1,\dots,O_4 \in \PP^2(k)$ in linear general
position and let $P_i = (O_i,O_i) \in \PP^2 \times \PP^2$. Consider
the linear system $\PP^{15}$ of hypersurfaces of bidegree $(2,2)$ in
$\PP^2 \times \PP^2$ that have ordinary double points at
$P_1,\,P_2,\,P_3,\,P_4$. For such a threefold $X$, projecting onto the
first factor $\PP^2$ induces a conic bundle morphism $X \to
\PP^2$. The discriminant is a sextic curve $C_X$ whose singularities
are generically the image of the singularities of $X$
\cite[Prop.~1.2]{FV16}. Thus for general $X$ there are four nodes, so
the normalisation $\widetilde{C}_X$ has genus $6$. Moreover, applying
Stein factorisation to the singular locus as in \S\ref{subsec:A4}, we
obtain a geometrically connected \'etale double cover $\Gamma_X \to
\widetilde{C}_X$. This constructs a rational map $\PP^{15}
\dashrightarrow \cR_6$ over $k$. By \cite[Thm.~1.4]{FV16} this map is
dominant, and hence $\cR_6$ is unirational. Again $\cR_6 \to \cA_5$ is
dominant, and thus $\cA_5$ is also unirational. \qed

\smallskip
Combining the results of this section with the results from
\S\ref{subsec:algclosed} completes the proof of Theorem
\ref{thm:unirational}. \qed

\section{Weak approximation}\label{sec:WA}
\subsection{$k_v$-points of a stack}\label{subsec:v-adic}
Let $k$ be a number field and $v$ a place of $k$. For finitely
presented algebraic stacks $\ucX$ over $k_v$, Christensen
\cite[\S5]{Chr} gave a topology on $\ucX\latt{k_v}$, extending the
$v$-adic topology on the usual $k_v$-points of schemes. It has the
following properties:
\begin{enumerate}
	\item Any morphism of stacks over $k_v$ induces a continuous
          map on $k_v$-points \cite[Thm.~9.0.3]{Chr}.
	\item Any smooth morphism of stacks over $k_v$ induces an open
          map on $k_v$-points \cite[Thm.~11.0.4]{Chr}.
\end{enumerate}
The topology is unique because Christensen proves in
\cite[Thm.~7.0.7]{Chr} that any $k_v$-point of an algebraic stack is
the image of a $k_v$-point under a smooth morphism from some scheme.

The following corresponds to the well-known fact that a non-empty
$v$-adic open subset of a smooth irreducible scheme is Zariski dense,
which is an application of the implicit function theorem.

\begin{lemma} \label{lem:open_dense}
Let $\ucX$ be a smooth finitely presented irreducible algebraic stack
over $k_v$ and let $W \subseteq \ucX(k_v)$ be non-empty and open in
the $v$-adic sense above. Then $W$ is dense in $\ucX$.
\end{lemma}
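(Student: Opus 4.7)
The plan is to prove the contrapositive: assume the image of $W$ under the natural map $\ucX(k_v) \to |\ucX|$ is not Zariski dense, so it lies in $|\cZ|$ for some proper reduced closed substack $\cZ \subsetneq \ucX$, and derive a contradiction by pulling back $W$ to a smooth atlas adapted to a chosen point of $W$ and applying the scheme-level version of the statement.

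First I would fix any $x_0 \in W$. A single global smooth atlas of $\ucX$ need not hit $x_0$ on $k_v$-points, so I would instead invoke \cite[Thm.~7.0.7]{Chr} to produce a smooth morphism $f\colon Y \to \ucX$ from a scheme $Y$ together with $y_0 \in Y(k_v)$ satisfying $f_*(y_0) = x_0$. Because $\ucX$ is smooth over $k_v$ and $f$ is smooth, $Y$ is smooth of finite type over $k_v$; I may then replace $Y$ by its (open, smooth) irreducible component through $y_0$ and assume $Y$ smooth and irreducible.

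By the continuity of $f_*$ from \cite[Thm.~9.0.3]{Chr}, the set $f_*^{-1}(W) \subseteq Y(k_v)$ is $v$-adically open, and it contains $y_0$ and so is non-empty. The scheme case of the statement, a standard consequence of the $k_v$-analytic implicit function theorem on a smooth irreducible variety, then gives that $f_*^{-1}(W)$ is Zariski dense in $Y$. By the standing assumption, $f_*^{-1}(W)$ is contained in $f^{-1}(\cZ)(k_v)$; since $\cZ \to \ucX$ is a closed immersion (and hence representable), $f^{-1}(\cZ) = Y \times_\ucX \cZ$ is a genuine closed subscheme of $Y$. A closed subscheme of $Y$ meeting a Zariski dense set of $k_v$-points must equal $Y$, so $f$ factors through $\cZ$.

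To conclude, I would use that a smooth morphism of algebraic stacks is open on underlying topological spaces, so $f(Y)$ is a non-empty open subset of $|\ucX|$ contained in $|\cZ|$. As $\ucX$ is irreducible, $|\ucX|$ is irreducible and so any non-empty open is dense, forcing $\cZ = \ucX$ and giving the desired contradiction. The main obstacle I anticipate is precisely the second step: because a smooth surjection $U \to \ucX$ from a scheme need not be surjective on $k_v$-points, one cannot work with a single global atlas from the outset but must adapt the atlas to each point, which is the content of \cite[Thm.~7.0.7]{Chr}.
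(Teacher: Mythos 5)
Your proposal is correct and follows essentially the same route as the paper's proof: both invoke Christensen's Theorem~7.0.7 to produce a smooth morphism from a scheme whose $k_v$-points meet $W$, pull $W$ back, apply the scheme case via the implicit function theorem, and conclude using continuity and openness of smooth morphisms together with irreducibility of $\ucX$. The only difference is that you phrase the argument contrapositively via a proper closed substack $\cZ$, whereas the paper argues density directly; this is purely cosmetic.
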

\begin{proof}
Let $f\colon Z \to \ucX$ be a smooth morphism from a finitely
presented irreducible scheme such that $f(Z(k_v)) \cap W \neq
\emptyset$; this exists by \cite[Thm~7.0.7]{Chr}. Then $f^{-1}(W)$ is
a non-empty $v$-adic open set, so it is dense in $Z$ as $Z$ is a
scheme.  However both $Z \to \ucX$ and $Z(k_v) \to \ucX(k_v)$ are open
and continuous. Thus $f(Z)$ is dense in $\ucX$ and $f(f^{-1}(W))$ is
dense in $f(Z)$, and the result easily follows.
\end{proof}

We require the following version of Hensel's lemma for algebraic
stacks.

\begin{lemma}\label{lem:henselstacks}
Let $R$ be a complete noetherian local ring, or an excellent Henselian
discrete valuation ring, with maximal ideal $\fm$.  Let $\ucX$ be a
smooth algebraic stack over $R$. Then for all $n \in \NN$ the natural
map
\[
\ucX(R) \To \ucX(R/\fm^n)
\]
is surjective.
\end{lemma}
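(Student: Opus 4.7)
The plan is to proceed in two stages: first, inductively build a compatible tower of lifts of $\bar{x}_n \in \ucX(R/\fm^n)$ through the artinian quotients $R/\fm^m$; second, algebraize this formal data to an actual element of $\ucX(R)$.

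For the inductive step, the key input is that since $\ucX \to \Spec R$ is smooth, it is formally smooth. Concretely, for each $m \geq n$ the ideal $\fm^m/\fm^{m+1} \subset R/\fm^{m+1}$ is square-zero, so formal smoothness guarantees that any object of $\ucX(R/\fm^m)$ lifts to one of $\ucX(R/\fm^{m+1})$, together with a chosen isomorphism between its restriction and the original. Starting from the given $\bar{x}_n$ and iterating, we obtain a compatible family $(\bar{x}_m, \phi_m)_{m \geq n}$, where $\phi_m \colon \bar{x}_{m+1}|_{R/\fm^m} \xrightarrow{\sim} \bar{x}_m$. This is exactly the data of a formal object of $\ucX$ over $\Spf \hat R$ lifting $\bar{x}_n$.

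To algebraize this formal object we distinguish the two cases. When $R$ is complete noetherian local, we invoke the effectivity of formal objects for algebraic stacks locally of finite presentation (Grothendieck's existence theorem, extended to stacks by Olsson, cf.\ the Stacks Project tag 07XB). This directly produces the desired $\bar{x} \in \ucX(R)$ whose reduction modulo $\fm^n$ is isomorphic to $\bar{x}_n$, which by the conventions of the paper is the required surjectivity statement. When $R$ is an excellent Henselian discrete valuation ring, apply the complete case to the completion $\hat R$ to get $\hat{x} \in \ucX(\hat R)$, and then use Artin approximation for algebraic stacks to replace $\hat{x}$ by an element of $\ucX(R)$ agreeing with it modulo $\fm^n$; the excellence and Henselian hypotheses on $R$ are exactly what Artin approximation requires.

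The main obstacle is essentially bookkeeping with the $2$-categorical nature of stacks: the successive lifts are only unique up to isomorphism, so at each stage one is choosing additional gluing data, and it is the coherence of these choices that makes the formal object effective. Once one commits to working with formal objects in the stacks sense and cites the correct algebraization/approximation theorems, the argument reduces to the familiar one for smooth schemes; the routine verifications for schemes then go through without essential change.
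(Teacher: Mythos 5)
Your proposal is correct and follows essentially the same route as the paper: formal smoothness gives the successive lifts, effectivity of formal objects identifies the inverse limit with $\ucX(\widehat{R})$, and Artin approximation (applied in the paper to the functor of isomorphism classes of objects of $\ucX$) handles the excellent Henselian case. The only cosmetic difference is that the paper phrases the first step as surjectivity of $\ucX(R/\fm^{n+1})\to\ucX(R/\fm^n)$ rather than as building a tower, but the content is identical.
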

\begin{proof}
Recall that smooth means formally smooth and locally of finite
presentation.  Formal smoothness (see \cite[Tag~0DNV]{Stacks}) implies
for any $n \in \NN$ and any $1$-commutative diagram
\[
\xymatrix{
	\Spec R/\fm^{n}  \ar[r] \ar[d] & \ucX \ar[d] \\
	\Spec R/\fm^{n+1} \ar[r] & \Spec R
}
\]
there exists a diagonal arrow making the diagram $2$-commutative.  We
conclude that $\ucX\latt{R/\fm^{n+1}}\to\ucX\latt{R/\fm^{n}}$ is
surjective for all $n \in \NN$, and it follows that $\lim_i
\ucX\latt{R/\fm^{n+i}}\to\ucX\latt{R/\fm^{n}}$ is surjective. By effectivity of formal objects \cite[Lem.~98.9.5, Tag 07X3]{Stacks} we have
$\lim_i \ucX(R/\fm^{n+i}) =
\ucX(\widehat{R})$, where $\widehat{R}$ denotes the completion of
$R$. Thus if $R$ is complete we are done. Otherwise, Artin
approximation \cite[Thm~1.12]{Art}, applied to the functor given by
the isomorphism classes of objects of $\ucX$, shows that the
composition $\ucX(R) \to \ucX(\widehat{R}) \to \ucX(R/\fm^{n})$ is
surjective for any $n$, as required.
\end{proof}


\subsection{Weak approximation}\label{subsec:WA}
\begin{definition}
We say that a finitely presented algebraic stack $\ucX$ over $k$
satisfies \emph{weak approximation} if the natural map
$\ucX^{\mathrm{sm}}\latt{k} \to \prod_v\ucX^{\mathrm{sm}}\latt{k_v}$
has dense image.  (Here $\ucX^{\mathrm{sm}}$ denotes the smooth locus
of $\ucX$.)
\end{definition}

Note that, unlike the case of varieties, the map $\ucX\latt{k} \to
\prod_v\ucX\latt{k_v}$ need not be injective (its injectivity is
related to triviality of various Tate--Shafarevich sets). Nevertheless
we will sometimes abuse notation and use $\ucX\latt{k}$ also to denote
the image of this map, for instance in Lemma~\ref{lem:WA_F_P}.

\begin{lemma} \label{lem:WA_criterion}
$\ucX$ satisfies weak approximation if and only if either
  $\prod_v\ucX^{\mathrm{sm}}(k_v)$ is empty or, for any finite set
  $S$ of places of $k$ and for any non-empty open subset $W \subseteq
  \prod_{v \in S} \ucX^{\mathrm{sm}}(k_v)$, there exists an element of
  $\ucX(k)$ whose images lies in $W$.
\end{lemma}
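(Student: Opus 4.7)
The plan is to unwind the product topology on $\prod_v \ucX^{\mathrm{sm}}(k_v)$ and match the two formulations basic-open by basic-open. A basic open set has the form $U = \prod_{v \in S} W_v \times \prod_{v \notin S} \ucX^{\mathrm{sm}}(k_v)$ for some finite set of places $S$ and open subsets $W_v \subseteq \ucX^{\mathrm{sm}}(k_v)$; set $W := \prod_{v \in S} W_v$. Weak approximation is equivalent to the statement that every non-empty basic open $U$ contains the image of some $x \in \ucX^{\mathrm{sm}}(k)$, so the content of the lemma is just the conversion between this formulation and the one restricted to finitely many places.

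For the forward implication I would assume weak approximation holds and the full product is non-empty. Given any non-empty open $W \subseteq \prod_{v \in S} \ucX^{\mathrm{sm}}(k_v)$, extend it to the basic open $U = W \times \prod_{v \notin S} \ucX^{\mathrm{sm}}(k_v)$, which is non-empty because the full product is; density of $\ucX^{\mathrm{sm}}(k)$ supplies the required $x$, which is automatically an element of $\ucX(k)$.

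For the converse, if the full product is empty there is nothing to prove. Otherwise, given a non-empty basic open $U$, the set $W = \prod_{v \in S} W_v$ is non-empty and open, so the hypothesis produces $x \in \ucX(k)$ whose image lies in $W$. The last step is to verify that such an $x$ actually lies in $\ucX^{\mathrm{sm}}(k)$, so that $x$ contributes to weak approximation. Since $\ucX^{\mathrm{sm}} \hookrightarrow \ucX$ is an open immersion, the pullback of $\ucX^{\mathrm{sm}}$ along $x \colon \Spec k \to \ucX$ is an open subscheme of $\Spec k$; its base change to $\Spec k_v$ for any $v \in S$ must be all of $\Spec k_v$, since the induced $k_v$-point lies in $W_v \subseteq \ucX^{\mathrm{sm}}(k_v)$. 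Hence the pullback is all of $\Spec k$, so $x \in \ucX^{\mathrm{sm}}(k)$ as required.

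The argument is essentially bookkeeping in the product topology; I expect the only genuine (and minor) obstacle to be the final promotion step in the converse, which is what makes the formulation in terms of $\ucX(k)$ rather than $\ucX^{\mathrm{sm}}(k)$ equivalent to the one in the definition. This is resolved cleanly by openness of the smooth locus.
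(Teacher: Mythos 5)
Your argument is correct and matches the paper, whose proof is simply ``Follows immediately from the definition'': the content is exactly the bookkeeping with basic opens in the product topology that you carry out. Your final promotion step (that a point of $\ucX(k)$ whose image at some $v\in S$ lands in $\ucX^{\mathrm{sm}}(k_v)$ must already lie in $\ucX^{\mathrm{sm}}(k)$, since an open subscheme of $\Spec k$ is empty or everything) is a detail the paper leaves implicit, and you handle it correctly.
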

\begin{proof}
Follows immediately from the definition.
\end{proof}

A rational map $f\colon \ucX_1 \dashrightarrow \ucX_2$ of finitely
presented algebraic stacks is called \emph{birational} if there
exist dense open substacks $\ucU_i \subset \ucX_i$ such that $f$ is
defined on $\ucU_1$, $f(\ucU_1) \subseteq \ucU_2$, and
$f|_{\ucU_1}\colon \ucU_1 \to \ucU_2$ is an isomorphism. If such a
rational map exists, we say that $\ucX_1$ and $\ucX_2$ are
\emph{birationally equivalent}.

Weak approximation for smooth stacks is a birationally invariant
property.

\begin{lemma}\label{lem:WA_birational}
Let $\ucX$ and $\ucY$ be birationally equivalent smooth irreducible
algebraic stacks over $k$. Then $\ucX$ satisfies weak approximation if
and only if $\ucY$ does.
\end{lemma}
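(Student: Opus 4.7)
The strategy is to reduce weak approximation for $\ucY$ to that of its dense open substack $\ucV$, and then transfer across the birational isomorphism $f\colon \ucU \xrightarrow{\sim} \ucV$ to $\ucU \subseteq \ucX$ where weak approximation is assumed. By symmetry it suffices to show one direction, say: if $\ucX$ satisfies weak approximation, then so does $\ucY$. Since $\ucX$ and $\ucY$ are smooth, $\ucX^{\mathrm{sm}}=\ucX$ and $\ucY^{\mathrm{sm}}=\ucY$, simplifying the formulation.

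Using Lemma~\ref{lem:WA_criterion}, I would fix a finite set $S$ of places and a non-empty open subset $W \subseteq \prod_{v\in S}\ucY(k_v)$. The central geometric input is that for each $v$, the subset $\ucV(k_v)$ is both open and $v$-adically dense in $\ucY(k_v)$. Openness follows from smoothness of the open immersion $\ucV \hookrightarrow \ucY$ together with property (2) of the Christensen topology. Density is morally the stack analogue of Lemma~\ref{lem:open_dense} but in the opposite direction, and I would establish it by picking, for any $y \in \ucY(k_v)$, a smooth morphism $g\colon T\to \ucY$ from an irreducible scheme $T$ with $y$ in its image (using \cite[Thm.~7.0.7]{Chr}); then $g^{-1}(\ucV)\subseteq T$ is a non-empty Zariski open subscheme since $\ucV$ is Zariski dense in $\ucY$, and the classical implicit function theorem on the smooth scheme $T$ gives $v$-adic density of $g^{-1}(\ucV)(k_v)$ in $T(k_v)$. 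Pushing forward via the continuous open map $T(k_v)\to\ucY(k_v)$ yields the desired density.

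With density in hand, $W\cap\prod_{v\in S}\ucV(k_v)$ is a non-empty open subset, which I transport through the isomorphism $f$ to a non-empty open $W'\subseteq\prod_{v\in S}\ucU(k_v)$. Since $\ucU \hookrightarrow\ucX$ is a smooth open immersion, $W'$ is also open in $\prod_{v\in S}\ucX(k_v)$. Weak approximation for $\ucX$ then produces a point $x\in\ucX(k)$ whose image lies in $W'$, so in particular factors through $\ucU(k_v)$ at each $v\in S$. It remains to lift $x$ to $\ucU(k)$: pulling back the open immersion $\ucU\hookrightarrow\ucX$ along $x\colon\Spec k\to\ucX$ gives an open subscheme $Z\hookrightarrow\Spec k$ which is either empty or all of $\Spec k$, and base-changing to $k_v$ shows $Z\times_k k_v$ is non-empty, so $Z=\Spec k$ and $x\in\ucU(k)$. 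Then $f(x)\in\ucV(k)\subseteq\ucY(k)$ approximates $W$, completing the argument.

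The main obstacle I anticipate is verifying the $v$-adic density of $\ucV(k_v)$ in $\ucY(k_v)$ cleanly within the stack framework; it is not quite the content of Lemma~\ref{lem:open_dense} as stated (which goes in the opposite direction, from $v$-adic open to Zariski dense), so it requires genuinely working with a smooth chart and transferring the classical scheme-theoretic density statement. The second subtlety, namely the conversion between $k$-points of $\ucX$ and $\ucU$ as isomorphism classes, is essentially formal once one remembers that an open subscheme of $\Spec k$ is either empty or everything.
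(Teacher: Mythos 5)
Your proof is correct, and it reaches the conclusion by a recognisably different route from the paper's. The paper first reduces to the case of an open immersion $\ucX\hookrightarrow\ucY$ and then proves the two implications separately: the ``restriction'' direction is obtained as a special case of the fibration lemma (Lemma~\ref{lem:WA_fibration}), while the ``extension'' direction takes the given open set $W\subseteq\prod_{v\in S}\ucY(k_v)$ and shows it meets $\prod_{v\in S}\ucX(k_v)$ by invoking Lemma~\ref{lem:open_dense} ($v$-adically open implies Zariski dense, hence meets the dense open substack). You instead exploit the symmetry of birational equivalence to prove only one implication, which lets you dispense with the fibration lemma entirely; the price is that you must establish the complementary density statement --- that $\ucV(k_v)$ is $v$-adically dense in $\ucY(k_v)$ for a dense open substack $\ucV$ --- which, as you correctly note, is not Lemma~\ref{lem:open_dense} as stated but is proved by the same smooth-chart-plus-implicit-function-theorem technique. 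Both your density argument and your final descent step (showing that a rational point of $\ucX$ whose local images at the places of $S$ land in $\ucU$ already lies in $\ucU(k)$, since an open subscheme of $\Spec k$ with a $k_v$-point is all of $\Spec k$) are sound; the latter step is genuinely needed in your formulation and is invisible in the paper's because there the rational point is produced directly on the open substack. The paper's version is shorter because it leans on lemmas already in place; yours is more self-contained and makes explicit the two-way traffic between Zariski and $v$-adic density that the paper leaves partly implicit.
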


\begin{proof}
It suffices to prove the result in the case where $\ucX \to \ucY$ is
an open immersion. Weak approximation for $\ucY$ implies weak
approximation for $\ucX$ as a special case of
Lemma~\ref{lem:WA_fibration} below. On the other hand, assume that
$\ucX$ satisfies weak approximation.  If $\prod_v \ucY(k_v) =
\emptyset$, there is nothing to prove.  Otherwise, let $W \subseteq
\prod_{v \in S} \ucY(k_v)$ as in Lemma~\ref{lem:WA_criterion}. It
suffices to show that $W \cap \ucX \neq \emptyset$. However this is
Lemma~\ref{lem:open_dense} applied to $\ucY$.
\end{proof}

In particular if $\ucX$ admits an open dense substack $U$ that is
isomorphic to a scheme, then weak approximation for $\ucX$ is
equivalent to weak approximation for the scheme $U$, and the
definition does not offer anything new. Therefore to get interesting
new problems in general one should consider stacks with non-trivial
generic stabilisers; such stacks typically admit open substacks that
are gerbes over a scheme. In such cases one may hope to prove weak
approximation using the following fibration result, which is
a stacky version of \cite[Prop.~1.1]{C-TG}.

\begin{lemma} \label{lem:WA_fibration}
Let $f\colon \ucX \to \ucY$ be a smooth morphism of smooth irreducible
algebraic stacks over $k$.  Assume that $\ucY$ satisfies weak
approximation and that the fibre over every rational point of $\ucY$
is everywhere locally soluble and satisfies weak approximation. Then
$\ucX$ satisfies weak approximation.
\end{lemma}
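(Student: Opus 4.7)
The plan is to follow the classical scheme-theoretic fibration argument of \cite[Prop.~1.1]{C-TG}, replacing the implicit function theorem by the openness and local-section properties afforded by Christensen's topology, as recorded in \S\ref{subsec:v-adic}.

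First I would apply Lemma~\ref{lem:WA_criterion} to reduce to the following concrete task: given a finite set of places $S$ and a non-empty open subset $W \subseteq \prod_{v \in S} \ucX^{\mathrm{sm}}(k_v)$ containing some point $(x_v)_{v \in S}$, exhibit an element of $\ucX(k)$ whose image lies in $W$. Set $y_v := f(x_v) \in \ucY(k_v)$. The case $\prod_v \ucX^{\mathrm{sm}}(k_v) = \emptyset$ is vacuous, so we may assume such data exist.

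The key local step is to construct, for each $v \in S$, an open neighbourhood $U_v \subseteq \ucY(k_v)$ of $y_v$ together with a continuous section $\sigma_v\colon U_v \to \ucX(k_v)$ of $f$ satisfying $\sigma_v(y_v) = x_v$ and $\sigma_v(U_v) \subseteq W_v$, where $W_v$ denotes the $v$-th slice of $W$. By \cite[Thm.~7.0.7]{Chr} there is a scheme $Z_v$, a smooth morphism $Z_v \to \ucX$, and a point $z_v \in Z_v(k_v)$ mapping to $x_v$. The composition $Z_v \to \ucY$ is smooth, so after further smoothing through a scheme atlas of $\ucY$ the classical implicit function theorem over $k_v$ supplies an analytic section of $Z_v \to \ucY$ in a neighbourhood of the image of $z_v$; postcomposing with $Z_v \to \ucX$ gives $\sigma_v$, and properties (1) and (2) of \S\ref{subsec:v-adic} make it continuous. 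Shrinking $U_v$ if necessary, continuity of $\sigma_v$ arranges $\sigma_v(U_v) \subseteq W_v$.

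With the $\sigma_v$ in hand, I would invoke weak approximation for $\ucY$ to pick $y \in \ucY(k)$ whose image in $\ucY(k_v)$ lies in $U_v$ for every $v \in S$. Setting $x_v' := \sigma_v(y) \in \ucX(k_v)$, each $x_v'$ lies in $W_v$ and has image $y$ in $\ucY(k_v)$, so $(x_v')_{v \in S}$ comes from a point of $\prod_{v \in S} \ucX_y(k_v)$, where $\ucX_y := \ucX \times_{\ucY,\,y} \Spec k$ is smooth over $k$ by smoothness of $f$. By hypothesis $\ucX_y$ is everywhere locally soluble and satisfies weak approximation. Applying weak approximation to the non-empty open set $W \cap \prod_{v \in S} \ucX_y(k_v)$ then yields a rational point of $\ucX_y$ whose image in $\ucX(k)$ lies in $W$, as required.

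The main obstacle is the local-section step: one must carefully verify that the implicit function theorem argument, carried out on a smooth scheme atlas, actually descends to a genuinely continuous section on stack-valued points in Christensen's topology, and that the image of this section may be prescribed inside the given open set $W_v$. Once this is in place the rest is a routine fibration argument mirroring \cite[Prop.~1.1]{C-TG}, with Lemma~\ref{lem:open_dense} and the openness of smooth morphisms doing the work of the classical density statements.
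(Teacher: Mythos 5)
Your overall architecture matches the paper's: reduce via Lemma~\ref{lem:WA_criterion}, approximate in the base, then approximate in the fibre over a rational point of the base. However, the middle step of your argument --- constructing, for each $v \in S$, a continuous local section $\sigma_v$ of $f$ on $k_v$-points via the implicit function theorem on scheme atlases --- is both the weakest link (as you yourself flag) and entirely unnecessary. As written it is not rigorous: the implicit function theorem would produce an analytic section of $Z_v \times_{\ucY} Y \to Y$ over an open subset of $Y(k_v)$ for some scheme atlas $Y \to \ucY$, and since $Y(k_v) \to \ucY(k_v)$ is open and continuous but not injective, it is not clear that this descends to a well-defined, let alone continuous, section on an open subset of $\ucY(k_v)$ in Christensen's topology.

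The repair is to notice that you never use $\sigma_v$ except to evaluate it at the single point $y$: all you actually need is that every $y'$ sufficiently close to $y_v$ equals $f(x')$ for some $x' \in W_v$, i.e., that $f(W_v)$ is an open neighbourhood of $y_v$ in $\ucY(k_v)$. This is exactly property (2) of \S\ref{subsec:v-adic} (\cite[Thm.~11.0.4]{Chr}): smooth morphisms induce open maps on $k_v$-points. This is what the paper does: $f(W) \subseteq \prod_{v \in S}\ucY(k_v)$ is open, weak approximation for $\ucY$ supplies $y \in \ucY(k)$ with image in $f(W)$, and then $f^{-1}(y) \cap W \neq \emptyset$ by construction; everywhere local solubility and weak approximation for the fibre $f^{-1}(y)$ then produce a rational point of $\ucX$ in $W$, exactly as in your final paragraph. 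Replacing your section construction by this one-line appeal to openness makes your proof correct and essentially identical to the paper's.
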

\begin{proof}
If $\prod_v \ucX(k_v) = \emptyset$, there is nothing to prove.
Otherwise let $W \subseteq \prod_{v \in S} \ucX(k_v)$ as in
Lemma~\ref{lem:WA_criterion}. As $f$ is smooth the image $f(W)
\subseteq \prod_{v \in S} \ucY(k_v)$ is open. So let $y \in \ucY(k)$
with image in $f(W)$. Then $f^{-1}(y) \cap W \neq \emptyset$ and
$f^{-1}(y)$ is everywhere locally soluble and satisfies weak
approximation, and thus $f^{-1}(y) \cap W$ contains a rational point,
as required.
\end{proof}

For neutral affine gerbes, weak approximation is equivalent to a
statement in Galois cohomology.

\begin{lemma} \label{lem:WA_cohomology}
Let $G$ be a finite type affine group scheme over $k$. Then $BG$
satisfies weak approximation if and only if the natural map
\[
\H^1(k,G) \To \prod_{v \in S} \H^1(k_v,G)
\]
is surjective for all finite sets of places $S$ of $k$.
\end{lemma}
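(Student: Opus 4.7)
The plan is to reduce the lemma to a single topological observation: that Christensen's $v$-adic topology on $BG(k_v)$ is \emph{discrete}. Under the standard identification $BG(F) = \H^1(F,G)$ for every field $F/k$ (recalling that by the Conventions $BG(F)$ denotes isomorphism classes of torsors), the map $BG(k) \to \prod_v BG(k_v)$ becomes the diagonal restriction map in Galois cohomology. Granted discreteness, a subset of a product of discrete topological spaces is dense in the product topology precisely when its projection onto every finite subproduct is surjective, and the claimed equivalence drops out.

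The main work is the discreteness claim. Fix any class $[T] \in \H^1(k_v, G)$ and choose a representing $G$-torsor $T$ over $\Spec k_v$. The choice of $T$ produces a morphism of $k_v$-stacks $\phi_T \colon \Spec k_v \to BG$ whose image on $k_v$-points is exactly $\{[T]\}$. I plan to show that $\phi_T$ is smooth; then by property~(2) of Christensen's topology recalled in \S\ref{subsec:v-adic}, the singleton $\{[T]\}$ is open in $BG(k_v)$, and since $[T]$ was arbitrary the topology is discrete.

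To verify smoothness, I form the $2$-fibre product of $\phi_T$ with the canonical smooth atlas $\Spec k_v \to BG$ classifying the trivial $G$-torsor. This fibre product represents the sheaf of isomorphisms between the trivial $G$-torsor and $T$, which is canonically identified with $T$ itself; the structural map $T \to \Spec k_v$ is smooth because $T$ is a torsor under the smooth affine group scheme $G$. By the fppf-local characterisation of smoothness for morphisms of stacks, $\phi_T$ is therefore smooth.

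The main obstacle, modest though it is, is the identification of the $2$-fibre product with $T$; everything else is essentially formal. (Throughout I take $G$ to be smooth, so that $BG^{\mathrm{sm}} = BG$; in the non-smooth case $BG^{\mathrm{sm}}$ is empty and the statement must be interpreted separately.)
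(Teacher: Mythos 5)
Your proposal is correct and follows essentially the same route as the paper: identify $BG(L)$ with $\H^1(L,G)$, observe that the topology on $BG(k_v)$ is discrete, and conclude from the fact that a subset of a product of discrete spaces is dense iff it surjects onto every finite subproduct. The only difference is in how discreteness is justified --- the paper invokes finiteness of $\H^1(k_v,G)$, while you prove each point is open via smoothness of the classifying morphism $\Spec k_v \to BG$ of a torsor (a slightly more careful argument, and note your smoothness hypothesis on $G$ is automatic here since $k$ is a number field, so char $0$).
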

\begin{proof}
Recall that for any field extension $k \subset L$ we have $BG\latt{L}
= \H^1(L,G)$, since both sets classify $G_L$ torsors over $L$. The set
$\H^1(k_v,G)$ is finite \cite[Thm.~6.14]{PR} and the induced topology
is simply the discrete topology. Therefore it suffices to note that a
subset of a product of discrete sets is dense if and only if it
surjects onto any finite collection of factors.
\end{proof}

We emphasise that for a general algebraic stack $\ucX$ over $\QQ$ it
can happen that its coarse moduli space $\cX$ satisfies weak
approximation but $\ucX$ itself does not.

\begin{example} \label{ex:GW}	
Take $\ucX = B \ZZ/8\ZZ$. Then the associated coarse moduli space is
just $\Spec \QQ$, which trivially satisfies weak approximation. But
$B\ZZ/8\ZZ$ fails weak approximation (the famous example of
Wang~\cite{Wang}): there is no $\ZZ/8\ZZ$-extension of $\QQ$ that
realises the unique unramified $\ZZ/8\ZZ$-extension of $\QQ_2$.
\end{example}

For $G=\mu_2$, however, which is the case relevant to us, there is no
such problem.

\begin{lemma} \label{lem:mu_2}
If $\mu_n \subset k$, then $B \mu_n$ satisfies weak approximation.
\end{lemma}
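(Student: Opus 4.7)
The plan is to apply Lemma~\ref{lem:WA_cohomology} to reduce the statement to a surjectivity question in Galois cohomology, and then use Kummer theory together with classical weak approximation for the number field $k$.

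First I would invoke Lemma~\ref{lem:WA_cohomology} with $G = \mu_n$: it suffices to show that for any finite set of places $S$ of $k$, the natural map
\[
\H^1(k,\mu_n) \To \prod_{v \in S} \H^1(k_v,\mu_n)
\]
is surjective. Since $\mu_n \subset k$ (and hence $\mu_n \subset k_v$ for every $v$), Kummer theory identifies this map with the map $k^\times/(k^\times)^n \to \prod_{v \in S} k_v^\times / (k_v^\times)^n$ induced by the diagonal inclusion $k \hookrightarrow \prod_{v \in S} k_v$.

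Next I would observe that for every place $v$ the subgroup $(k_v^\times)^n \subseteq k_v^\times$ is open. In the non-archimedean case this follows from Hensel's lemma applied to $X^n - a$ (for $a$ close enough to $1$), and in the archimedean case it is immediate ($\CC^\times$ is $n$-divisible and $\RR_{>0}$ is open in $\RR^\times$). Hence, given any target element $(\alpha_v)_{v \in S} \in \prod_{v \in S} k_v^\times$, the set $\prod_{v \in S} \alpha_v \cdot (k_v^\times)^n$ is an open subset of $\prod_{v \in S} k_v^\times$.

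Finally I would invoke classical weak approximation: $k$ is dense in $\prod_{v \in S} k_v$, and thus also $k^\times$ is dense in $\prod_{v \in S} k_v^\times$ (the latter is an open subset of the former and we can stay away from $0$). In particular there exists $\alpha \in k^\times$ whose image lies in the open neighbourhood $\prod_{v \in S} \alpha_v \cdot (k_v^\times)^n$, and this $\alpha$ maps to the prescribed class $(\alpha_v \bmod (k_v^\times)^n)_{v \in S}$. This gives the required surjectivity and hence weak approximation for $B\mu_n$. There is no serious obstacle here; the only subtlety is checking the openness of $(k_v^\times)^n$ uniformly in $v$, which is standard.
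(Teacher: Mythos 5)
Your proposal is correct and follows the same route as the paper: reduce via Lemma~\ref{lem:WA_cohomology} to surjectivity of $k^\times/k^{\times n} \to \prod_{v\in S} k_v^\times/k_v^{\times n}$, identify the cohomology groups by Kummer theory, and conclude from weak approximation for $k$. The paper simply leaves implicit the openness of $k_v^{\times n}$ in $k_v^\times$, which you rightly spell out.
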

\begin{proof}
By Kummer theory we have $\H^1(k,\mu_n) = k^\times/k^{\times n}$, and
similarly for $k_v$.  It thus suffices to apply
Lemma~\ref{lem:WA_cohomology} and note that weak approximation for $k$
implies that $k^\times /k^{\times n} \to \prod_{v \in S} k_v^\times /
k_v^{\times n}$ is surjective.
\end{proof}

From Hensel's lemma we obtain the following application of weak
approximation.

\begin{lemma} \label{lem:WA_F_P}
Let $\ucX$ be a smooth irreducible finitely presented algebraic stack
over $k$ with $\prod_v\ucX(k_v) \neq \emptyset$ that satisfies weak
approximation, and $S$ a finite collection of non-zero prime ideals of
$k$. Let $\ucX_{\cO_k}$ be a model of $\ucX$ over $\cO_k$ that is
smooth over all elements of $S$. Then the map
\[
\ucX(k) \cap \prod_{\fp \in S} \ucX_{\cO_k}(\cO_\fp)\To \prod_{\fp \in S} \ucX_{\cO_k}(\FF_\fp)
\]
is surjective.
\end{lemma}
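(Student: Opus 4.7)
The plan is to combine the stacky Hensel lemma (Lemma~\ref{lem:henselstacks}) with the weak approximation hypothesis via an openness argument for the reduction map.

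First, for each $\fp \in S$, I would use Lemma~\ref{lem:henselstacks} with $R = \cO_\fp$ (a complete DVR over which $\ucX_{\cO_k}$ is smooth by hypothesis) to lift the prescribed $\bar{x}_\fp \in \ucX_{\cO_k}(\FF_\fp)$ to some $x_\fp \in \ucX_{\cO_k}(\cO_\fp) \subseteq \ucX(k_\fp)$. This handles existence of lifts one prime at a time; globalising them to a $k$-point is what requires weak approximation.

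The key local step is to show that the ``residue disc''
\[
W_\fp := \{\, y \in \ucX_{\cO_k}(\cO_\fp) : y \bmod \fp = \bar{x}_\fp \,\} \subseteq \ucX(k_\fp)
\]
is a non-empty open subset in the $v$-adic topology of \S\ref{subsec:v-adic}. Non-emptiness is immediate from the previous step, since $x_\fp \in W_\fp$. For openness, I would invoke \cite[Thm.~7.0.7]{Chr} to produce a smooth morphism $f\colon Z \to \ucX_{\cO_k,\cO_\fp}$ from a smooth finitely presented $\cO_\fp$-scheme $Z$ whose induced map on $k_\fp$-points hits $x_\fp$. Lifting gives $z \in Z(\cO_\fp)$ with $f(z) = x_\fp$ and $z_0 := z \bmod \fp \in Z(\FF_\fp)$ satisfying $f(z_0) = \bar{x}_\fp$. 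The classical residue-disc argument for smooth schemes over a complete DVR shows that $V_{z_0} := \{z' \in Z(\cO_\fp) : z' \bmod \fp = z_0\}$ is open in $Z(k_\fp)$; property~(2) of Christensen's topology then guarantees that its image $f(V_{z_0})$ is open in $\ucX(k_\fp)$, and naturality of reduction places this image inside $W_\fp$ while still containing $x_\fp$.

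Finally, put $W := \prod_{\fp \in S} W_\fp$, a non-empty open subset of $\prod_{\fp \in S} \ucX(k_\fp)$. Since $\ucX$ is smooth (so $\ucX = \ucX^{\mathrm{sm}}$) and satisfies weak approximation, Lemma~\ref{lem:WA_criterion} produces a point $y \in \ucX(k)$ whose image lies in $W$. By construction such a $y$ is integral at every $\fp \in S$ and reduces to $\bar{x}_\fp$ there, giving the desired surjectivity.

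The main subtlety I anticipate is precisely the openness of $W_\fp$: one must match Christensen's topology on stack points with the classical $\fp$-adic topology on the scheme presentation used to define the reduction map. Once this local openness is in place, the argument is a mechanical assembly of Hensel's lemma, Christensen openness, and the assumed weak approximation.
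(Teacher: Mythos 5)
Your proposal follows the same high-level strategy as the paper: use the stacky Hensel lemma to lift residue points to $\cO_\fp$-points, establish that each ``residue disc'' is a non-empty open subset of $\ucX(k_\fp)$, and then invoke weak approximation via Lemma~\ref{lem:WA_criterion} to find a global $k$-point in the intersection. The essential difference lies in how you establish openness of the residue disc. The paper does this in one line: the reduction map $\ucX_{\cO_k}(\cO_\fp) \to \ucX_{\cO_k}(\FF_\fp)$ is continuous because $\cO_\fp \to \FF_\fp$ is continuous \cite[Prop.~9.0.4]{Chr}, and the target is a finite discrete set, so the fibre over any point is open. You instead descend to a smooth scheme presentation, run the classical residue-disc argument there, and then push forward using openness of smooth morphisms \cite[Thm.~11.0.4]{Chr}. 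This works in principle, but is considerably more circuitous than needed, and there is a gap in your citation: \cite[Thm.~7.0.7]{Chr} is a statement about stacks over a field $k_v$, so it produces a smooth scheme presentation over $k_\fp$ hitting $x_\fp$, not an $\cO_\fp$-presentation. Arranging a smooth presentation $Z \to \ucX_{\cO_k, \cO_\fp}$ over $\cO_\fp$ together with an integral lift $z \in Z(\cO_\fp)$ mapping to $x_\fp$ is certainly possible (e.g.\ take any smooth presentation of the integral model, restrict to the fibre over $x_\fp$, note this is smooth over $\cO_\fp$ with $\FF_\fp$-points, and lift by Hensel), but it requires an extra argument beyond the stated citation. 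The continuity-plus-discreteness route avoids this bookkeeping entirely, which is why the paper opts for it.
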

\begin{proof}
We first note that the map $\prod_{\fp \in
  S}\ucX_{\cO_k}(\cO_{\fp})\to \prod_{\fp \in S}
\ucX_{\cO_k}(\FF_\fp)$ is surjective for all $\fp \in S$: this follows
from Hensel's lemma for stacks, Lemma~\ref{lem:henselstacks}.
Moreover the map $\ucX_{\cO_k}(\cO_{\fp}) \to \ucX_{\cO_k}(\FF_\fp)$
is continuous as $\cO_\fp\to \FF_\fp$ is continuous
\cite[Prop.~9.0.4]{Chr}.  As $\ucX_{\cO_k}(\FF_\fp)$ is discrete, we
find that the fibre of a point is an open subset of
$\ucX(k_\fp)$. Lemma~\ref{lem:WA_criterion} thus implies that it
contains a rational point, as required.
\end{proof}

\subsection{Weak approximation for $\ucA_2$} \label{subsec:WA_A_2}
Before turning to $\ucA_3$, we briefly consider the simpler case of
$\ucA_2$, where weak approximation holds. To see this, we first note
that the natural map $\ucM_2 \to \ucA_2$ is birational. Indeed the
induced map on coarse moduli spaces is birational, and furthermore the
map on generic stabilisers is an isomorphism: this is the strong Torelli
theorem given (by Serre) in \cite[Th\'eor\`eme 3]{LS}.

Therefore by Lemma \ref{lem:WA_birational} it suffices to prove that
$\ucM_2$ satisfies weak approximation. However an element of
$\ucM_2(k_v)$ is represented by a hyperelliptic curve over $k_v$, and
one can approximate this to an arbitrary precision by a hyperelliptic
curve over $k$ by simply approximating the coefficients (similarly for
any finite collection of places).

A similar proof also shows that $\ucA_1$ satisfies weak approximation.

\subsection{Weak approximation for $\ucA_3$}\label{subsec:WA_A_3}
By Theorem~\ref{thm:A3stablyrational} we know that $\cA_3$ is stably
rational over $\QQ$, and hence satisfies weak approximation
\cite[Prop.~1.2]{C-TG}.

Let us briefly consider how one would try to use this to prove that
$\ucA_3$ satisfies weak approximation, and some of the subtleties that
arise. Let $h\colon\ucA_3 \to \cA_3$ be the coarse moduli map. Let $k$ be a number field and $S$ a finite set of places of
$k$. For $v \in S$ let $A_v \in \ucA_3(k_v)$. Then there exists a
$k$-rational point $a \in \cA_3(k)$ arbitrarily close to
$a_v:=h(A_v)$. The first issue that arises is that there is no
guarantee that the fibre $h^{-1}(a)$ contains a rational point; in
classical parlance $k$ is a field of moduli for $a$, but there is no
guarantee that $k$ is a field of definition for $a$. Assuming that we
overcome this issue and find $A \in \ucA_3(k)$ with $h(A) = a$, the
second issue is as follows: we have that $h(A)$ is close to each
$h(A_v)$, but this does not guarantee that $A$ is close to the $A_v$;
in classical parlance this means that we can only guarantee that $A$
is close to some Galois twist of the $A_v$, and not our original
$A_v$.

To overcome these issues we need to understand both the generic field
of definition as well as the generic Galois twist. This is achieved by
the following result of Shimura~\cite{Shi} reformulated in stacky
language. (See~\cite{BV} for further applications of stacks to field of
moduli questions.)

\begin{lemma} \label{lem:neutralgerbe}
Let $g \in \NN$ and $h\colon \ucA_g \to \cA_g$ the coarse
moduli map. There exists a dense open subset $V \subset \cA_g$ such
that $U:=h^{-1}(V) \to V$ is a $\mu_2$-gerbe. This gerbe is neutral if
and only if $g$ is odd.
\end{lemma}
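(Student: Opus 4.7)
The plan is to split the statement into two parts: (i) the map $h$ is a $\mu_2$-gerbe over some dense open $V \subset \cA_g$; (ii) this gerbe is neutral precisely when $g$ is odd.

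For (i), I would first exhibit a canonical inclusion $\mu_2 \hookrightarrow \underline{\Aut}_{\ucA_g}$. For any ppav $(A,\lambda)$, the inversion $[-1]_A$ preserves $\lambda$, because $\lambda\colon A \to A^\vee$ is symmetric and $[-1]^\vee = [-1]$; this gives a central embedding of $\mu_2$ into the automorphism group of every object. Conversely, for a very general complex ppav we have $\End(A)=\ZZ$, hence $\Aut(A,\lambda)=\{\pm 1\}$; since having strictly larger endomorphism ring is a countable union of proper closed conditions (the Humbert loci), the locus where the stabiliser is larger than $\mu_2$ is a proper closed substack of $\ucA_g$. Let $V \subset \cA_g$ be the complement of its image. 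Then the map $U := h^{-1}(V) \to V$ is proper with constant inertia $\mu_2$, and a standard argument (using that $\ucA_g$ is smooth over $\ZZ$ and so in particular flat) shows it is an fppf gerbe banded by $\mu_2$, i.e.\ a $\mu_2$-gerbe.

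For (ii), neutrality of the gerbe $U \to V$ is equivalent to the existence of a section $V \to U$, i.e.\ to the existence of a family of ppav $\cA \to V$ whose associated moduli map $V \to \cA_g$ is the canonical inclusion. This is precisely the classical question of whether the generic ppav of dimension $g$ has a model over its field of moduli, to which Shimura~\cite{Shi} gave a complete answer: the obstruction to descending vanishes if and only if $g$ is odd. Concretely, for $g$ odd one produces a section by taking a symmetric theta divisor normalised so that descent data become canonical; for $g$ even Shimura constructs ppavs whose field of moduli is strictly smaller than any field of definition, and these obstructions assemble into the non-trivial class of the gerbe in $\H^2(V,\mu_2)$.

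The main technical obstacle is translating Shimura's classical statements, which are phrased one ppav at a time in terms of field of moduli versus field of definition, into the global statement that the gerbe $U \to V$ has (resp.\ does not have) a section. A clean way to carry this out, in the spirit of~\cite{BV}, is to pass to the generic point $\eta$ of $V$: the gerbe $U \to V$ admits a section if and only if its restriction to $\eta$ does, and that restriction represents exactly the field-of-moduli-equals-field-of-definition question for the tautological ppav over the function field of $\cA_g$. Shimura's theorem then directly yields the dichotomy.
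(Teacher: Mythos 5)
Your proposal is correct and follows essentially the same route as the paper: one shows the generic automorphism group is $\mu_2$ and spreads out to get the gerbe structure over a dense open $V$, then reduces neutrality to the generic point, where it becomes exactly Shimura's field-of-moduli versus field-of-definition dichotomy for the generic ppav over the function field of $\cA_g$. The only cosmetic point is that closedness of the locus with extra automorphisms should be deduced from finiteness of the inertia stack rather than from the Humbert/endomorphism loci (which form only a countable union of closed subsets); this does not affect the argument.
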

\begin{proof}
It is well known that the generic ppav has automorphism group $\mu_2$;
this implies that the generic fibre is a $\mu_2$-gerbe and one finds
$V$ by spreading out, cf.~\cite[\S{3.2}]{Poo2}.

It remains to show that the generic gerbe is neutral if and only if
$g$ is odd.  Let $K$ be the function field of $\cA_g$. Then the
generic gerbe is neutral if and only if the generic fibre has a
$K$-point, which means exactly that the generic ppav has a model over
$K$. The main result of~\cite{Shi} says this happens if and only if
$g$ is odd, as required.
\end{proof}

\subsection*{Proof of Theorem \ref{thm:A3weakapprox}}
By Lemma~\ref{lem:neutralgerbe} we know that $\ucA_3$ is birational to
a neutral $\mu_2$-gerbe over $\cA_3$.  The latter satisfies weak
approximation, and the fibres have rational points and satisfy weak
approximation by Lemma~\ref{lem:mu_2}. Thus the result follows from
Lemmas~\ref{lem:WA_birational} and~\ref{lem:WA_fibration}.  \qed

\begin{remark}
Our proof makes essential use of the fact that $g=3$ is odd. In fact,
the proof as written does not apply in the apparently easier case of
$g=2$ from~\S\ref{subsec:WA_A_2}, where Shimura showed that the
generic hyperelliptic curve of even genus does not descend to its
field of moduli \cite[Thm.~3]{Shi}. Then one might wonder whether, on
the contrary, the simple proof for $g=2$ can be adapted to the case of
$g=3$. This does not seem possible either since a key difference is
that the map $\ucM_3\to \ucA_3$ is no longer birational, despite
inducing a birational map on the coarse moduli spaces, because the
generic stabilisers no longer agree. The following gives an arithmetic
manifestation of this phenomenon.
\end{remark}

\begin{lemma}\label{lem:nonjacobian3fold}
Let $k$ be a field and $C$ a smooth quartic curve over $k$.  Then for
each separable quadratic extension $k \subset L$ there exists a
principally polarised abelian threefold $A$ over $k$ such that $A_L
\cong J(C)_L$ but $A$ is not in the image of map $\ucM_3(k) \to
\ucA_3(k)$.
\end{lemma}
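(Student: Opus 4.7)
The plan is to construct $A$ as an inner twist of $J(C)$ by the central $\mu_2=\langle[-1]\rangle$ sitting inside the polarised automorphism group. Since $C$ is a smooth plane quartic it is canonically embedded, hence non-hyperelliptic, so the strong Torelli theorem already invoked in \S\ref{subsec:WA_A_2} gives the canonical direct product decomposition $\Aut(J(C),\Theta)\cong\Aut(C)\times\mu_2$, with the second factor generated by multiplication by $-1$. The key point is that this $\mu_2$-summand is precisely the generic stabiliser of $\ucA_3$ that fails to lift through the Torelli morphism $\ucM_3\to\ucA_3$, and the twist constructed below isolates that failure.

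The separable quadratic extension $L/k$ determines a nontrivial class $\chi\in\H^1(k,\mu_2)$ (we may assume $\characteristic k\neq 2$, the only case in which $[-1]$ is a nontrivial automorphism). Composing $\chi$ with the inclusion $\mu_2\hookrightarrow\Aut(J(C),\Theta)$ and applying Galois descent, I would define $A:={}^{\chi}J(C)$, the ppav over $k$ whose $\Gal(\kbar/k)$-action on $J(C)(\kbar)$ is twisted by the cocycle $\sigma\mapsto[-1]^{\chi(\sigma)}$. Because $\chi$ restricts to the trivial class on $\Gal(\kbar/L)$, the construction yields an isomorphism $A_L\cong J(C)_L$ of ppavs, which proves the first half of the lemma.

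For the second half, suppose for contradiction that $A\cong J(C')$ as ppavs over $k$ for some smooth projective genus-$3$ curve $C'/k$. Base change to $L$ gives $J(C')_L\cong J(C)_L$, and since $C_L$ remains non-hyperelliptic, Torelli forces $C'_L\cong C_L$. Hence $C'$ is a $k$-form of $C$, giving a class $[C']\in\H^1(k,\Aut(C))$ whose image under the Torelli map is $[J(C')]=[A]$ in $\H^1(k,\Aut(J(C),\Theta))$. Using the splitting
\[
\H^1\bigl(k,\Aut(C)\times\mu_2\bigr) \;=\; \H^1(k,\Aut(C))\times\H^1(k,\mu_2),
\]
which is valid at the level of pointed sets because cocycles valued in a direct product of group schemes split componentwise, the image of $[C']$ has trivial $\mu_2$-component, whereas $[A]$ has $\mu_2$-component equal to the nontrivial class $\chi$. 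This contradiction forces $A$ to lie outside the image of $\ucM_3(k)\to\ucA_3(k)$.

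The main input is the direct product decomposition of $\Aut(J(C),\Theta)$ supplied by strong Torelli; once that is available the remainder is formal manipulation with non-abelian $\H^1$. A minor subtlety, which is really the conceptual content, is that the pointed-set splitting of $\H^1$ on the direct product survives even when $\Aut(C)$ is large and non-abelian (for example for the Klein quartic), so that the $\mu_2$-component is a genuine obstruction detectable at the level of Galois cohomology and independent of any automorphisms of the underlying curve.
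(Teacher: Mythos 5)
Your proof is correct and follows essentially the same route as the paper: use the strong Torelli decomposition $\Aut J(C) = \Aut C \times \langle[-1]\rangle$, twist $J(C)$ by the quadratic character attached to $L$ through the second factor, and observe via the componentwise splitting of $\H^1$ of a direct product that no twist coming from $\H^1(k,\Aut C)$ can have nontrivial second component. The paper states the final step in one line ("such a twist cannot arise from any twist of $C$"); your cohomological unpacking of it is the intended argument. One correction: your parenthetical excluding characteristic $2$ rests on a false premise, since $[-1]$ is a nontrivial automorphism of any nonzero abelian variety in every characteristic; the subgroup it generates is the constant group scheme $\ZZ/2\ZZ$ (not $\mu_2$), whose $\H^1$ classifies separable quadratic extensions in all characteristics, so the same argument covers characteristic $2$ as well --- as it must, because the lemma is stated for an arbitrary field $k$.
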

\begin{proof}
The strong Torelli theorem~\cite[Th\'eor\`eme 3]{LS} mentioned
in~\S\ref{subsec:WA_A_2} above shows that $\Aut J(C) = \Aut C \oplus
\ZZ/2\ZZ$.  As $\H^1(k,\ZZ/2\ZZ)$ parametrises separable quadratic
extensions of $k$, we see that for each such extension $k \subset L$
there is a quadratic twist $A$ of $J(C)$ by $L$.  Such a twist cannot
arise from any twist of $C$, thus $A$ is not the Jacobian of any curve
of genus $3$ over $k$.
\end{proof}

\subsection*{Proof of Theorem \ref{thm:prescribedreductions}}
Immediate from Theorem~\ref{thm:A3weakapprox} and
Lemma~\ref{lem:WA_F_P}.

\subsection*{Proof of Theorem \ref{thm:BL}}
As $g \geq 7$ the coarse moduli space has general type. Let $p$ be a
prime. To prove the result we will compare the cardinalities of the
two sets
\[
I_1=\im(\ucA_g(\FF_p) \To \cA_g(\FF_p)), \quad
I_2=\im(\ucA_g(\QQ) \cap \ucA_g(\ZZ_p) \To \cA_g(\FF_p)).
\]
The set $I_1$ is simply the collection of elements of $\cA_g(\FF_p)$
that arise from some ppav over $\FF_p$. The set $I_2$ is the
collection of elements of $\cA_g(\FF_p)$ that arise as the reduction
modulo $p$ of a ppav over $\QQ$ with good reduction at $p$ (note that
$I_2 \subseteq I_1$). We will show that $I_1$ has more elements than
$I_2$ for all sufficiently large $p$.

Firstly the Lang-Weil estimates \cite{LW} imply that $|\cA_g(\FF_p)|
\sim p^m$ as $p \to \infty$, where $m = g(g+1)/2 = \dim \cA_g$. By
Lemma~\ref{lem:neutralgerbe}, spreading out, and Lang--Weil, we have
\[
\# \{ a \in \cA_g(\FF_p) \mid h^{-1}(a) \text{ is not a $\mu_2$-gerbe}\} = O(p^{m-1}).
\]
However a $\mu_2$-gerbe over $\FF_p$ is necessarily neutral for $p
\neq 2$: indeed $\mu_2$-gerbes are classified by $\H^2(\FF_p,\mu_2)$
\cite[Thm~12.2.8]{Ols}. But Kummer theory implies that
$\H^2(\FF_p,\mu_2) = (\Br \FF_p)[2]$, which is trivial as $\Br\FF_p =
0$. We therefore deduce that $|I_1| \sim p^m$ as $p \to \infty$

For $I_2$, as $\cA_g$ has general type, the Bombieri--Lang conjecture
\cite[Conj.~9.5.11]{Poo2} predicts that $\cA_g(\QQ)$ is not Zariski
dense. Thus the Lang--Weil estimates imply that $|I_2| =
O(p^{m-1})$. Altogether we deduce that for all sufficiently large
primes $p$ we have $|I_1| > |I_2|$, which concludes the proof.  \qed

\end{document}